\newtheorem{theorem}{Theorem}[section]
\newtheorem{lemma}[theorem]{Lemma}
\newcounter{parentAlgoLine}
  \protected@edef\theparentequation{\theequation}%
\title
{Robust Implicit  Adaptive   Low Rank  Time-Stepping Methods for  Matrix Differential Equations }
\author{ 
Daniel Appel\"{o}
\thanks{Department of Mathematics, Virginia Tech,
Blacksburg, VA 24061 U.S.A.
 {\tt appelo@vt.edu} Research supported by DOE Office of Advanced Scientific Computing Research under the Advanced Research in Quantum Computing program, subcontracted from award 2019-LLNL-SCW-1683, NSF DMS-2208164, and Virginia Tech.
 }
\and
 Yingda Cheng
\thanks{Department of Mathematics, Virginia Tech,
Blacksburg, VA 24061 U.S.A.
 {\tt yingda@vt.edu}  Research supported by NSF DMS-2011838, DOE grant DE-SC0023164 and Virginia Tech.}}
\date{\today}
\begin{document}

\maketitle

\begin{abstract}
In this work, we develop implicit  rank-adaptive schemes for time-dependent matrix differential equations. The   dynamic low rank approximation (DLRA)  is a well-known technique to capture the dynamic low rank structure  based on Dirac–Frenkel time-dependent variational principle. In recent years, it has attracted a lot of attention due to its wide applicability.  Our schemes are inspired by the  three-step procedure used in the rank adaptive version of the unconventional robust integrator (the so called BUG integrator) \cite{ceruti2022rank} for DLRA. First, a prediction (basis update) step is made computing the approximate column and row spaces at the next time level. Second, a Galerkin evolution step is invoked using an implicit {solves} for the small core matrix. Finally, a truncation is made according to a prescribed error threshold. Since the DLRA is evolving the differential equation projected on to the tangent space of the low rank manifold, the error estimate of the BUG integrator contains the tangent projection (modeling) error which cannot be easily controlled by mesh refinement.
This can cause convergence issue for  equations with cross terms.

To address this issue, we propose a simple modification, consisting of merging the row and column spaces from the explicit step truncation method together with the BUG spaces in the prediction step. In addition, we propose an  adaptive strategy   where the BUG spaces are only computed if the residual for the solution obtained from the prediction space by explicit step truncation method, is too large. We prove stability and estimate the local truncation error of the schemes under assumptions. We benchmark the schemes in several tests, such as anisotropic diffusion, solid body rotation and the combination of the two, to show robust convergence properties.
\end{abstract}
 
\section{Introduction}
In this work, we are interested in solving linear matrix differential equations in the following form
\begin{equation}
\label{eqn:mode}
\frac{d}{dt}X(t)=F(X(t),t),  \quad X(t) \in \mathbb{R}^{m_1\times m_2}, \quad X(0)=X_0,
\end{equation}
where 
\[
F(X(t),t)=\sum_{j=1}^{s_{\rm sep}} A_j X(t) B_j^T +G(t),
\] 
and $A_j \in \mathbb{R}^{m_1\times m_1}, B_j \in \mathbb{R}^{m_2\times m_2}$ are sparse or structured matrices for which a fast matrix-vector product is assumed to be known. Further, $G(t) \in  \mathbb{R}^{m_1\times m_2} $ is a given function that is assumed to have a known low rank decomposition. The integer ${s_{\rm sep}}$, which denotes the separation rank of $F(\cdot),$  is assumed not to be too large. We focus on implicit methods, which are particularly needed for computing solutions to stiff equations.  

Equation \eqref{eqn:mode} arises in many applications governed by partial differential equations (PDE). For example, using the method of lines approach, we can cast numerical discretizations  of  two-dimensional (or even higher dimensional) linear time-dependent convection-diffusion-reaction equations in the form of \eqref{eqn:mode}. If we represent the unknowns on a  Cartesian grid, the elements of $X(t)$ will represent the unknowns (e.g. point values of the solution) at a given time $t.$ The matrices $A_j, B_j$ will, e.g., correspond to difference operators or variable coefficients, and $G(t)$ encodes the given source terms and boundary conditions. Compared to the traditional approach of numerical PDE solvers, the matrix approach does not vectorize the unknowns into a single vector at each time step, but rather we represent the unknowns in a matrix format, retaining the relative relations in each spatial dimension. 

In recent years, the matrix (or tensor in higher dimensions) approach received increased attention in numerical analysis because it offers a way to tame \emph{the curse of dimensionality}, enabling approximations to solutions of high dimensional PDEs \cite{hackbusch2012tensor,khoromskij2018tensor}. The overarching idea is to expose low rank structure in the solution manifold in order to drastically reduce storage and computation. This is the idea of  low rank tensor methods for PDEs \cite{grasedyck2013literature,bachmayr2023low}. In two dimensions, this approach reduces to the low rank matrix method, where it is assumed that the matrix $X(t)$ has a small rank $r.$  As is well known, a rank $r$ matrix of size $m_1 \times m_2$  allows for a SVD decomposition with only $(m_1+m_2-r)r$ degrees of freedom. This is much smaller than $m_1m_2$,  the degrees of freedom of the full matrix, when $r$ is small compared to $m_1$ and $m_2$.

If the rank of $X(t)$ is fixed, i.e. we constrain the solution $X(t)$ to live on the rank $r$ matrix manifold
$\mathcal{M}_r=\{X \in  \mathbb{R}^{m_1\times m_2}, {\sf rank}(X)=r\},$
then an effective numerical solution can be computed by the dynamic low rank approximation (DLRA) \cite{koch2007dynamical,lubich2014projector}. The DLRA solves
$
\frac{d}{dt}X(t)=\Pi_{X(t)} F(X(t),t),  
$
where $\Pi_{X(t)}$ is the orthogonal projection onto the tangent space $T_{X(t)}\mathcal{M}_r$ of the manifold $\mathcal{M}_r$ at $X(t).$ The DLRA has found success in many applications, including extensions to various tensor formats  \cite{lubich2018time, lubich2015time}. However, the constraint  that $X(t) \in \mathcal{M}_r$ requires that the rank $r$ is estimated a-priori and this is in contrast with the rank adaptive schemes.

A rank adaptive method adaptively choose the rank at each time step according to a prescribed error tolerance. Several approaches to rank adaptivity can be found in the literature. For example, a straightforward  idea is the so-called step truncation method \cite{dektor2021rank} which evolves the low rank solution for one time step by a traditional time stepping method in an ambient space of higher rank, then performs a truncation (by SVD with given tolerance). This method is intuitive and effective for explicit time stepping methods. For implicit schemes  \cite{rodgers2023implicit}, implementation of step truncation methods is nontrivial and requires an effective iterative solver in adaptive low rank format  \cite{grasedyck2013literature,bachmayr2023low}. Another approach that has been suggested by multiple groups, and that is similar to what we propose, is to modify the DLRA method by using rank increase and decrease indicators and augment or truncate the spaces at each time step \cite{hochbruck2023rank,hauck2023predictor,PhysRevB.102.094315}. In particular the ideas in \cite{PhysRevB.102.094315}, while being specialized to time dependent Schr\"{o}dinger, are similar to the approach we take here. A promising idea in \cite{ceruti2022rank} uses the so called unconventional (BUG) integrator \cite{ceruti2022unconventional} to achieve rank adaptivity. This approach was recently extended to higher order in time in \cite{Nakao:2023aa}. Finally we note the recent paper, \cite{kahza2024krylovbasedadaptiverankimplicittime}, that uses the extended Krylov subspace method,  \cite{simoncini2007new} to construct adaptive-rank implicit time integrators. 

In the BUG integrator \cite{ceruti2022unconventional}, two subproblems that come from the projector splitting of DLRA \cite{lubich2014projector} are solved. Then, the solutions to the two subproblems are used to update the column and row spaces of the matrix; and a Galerkin evolution step is performed in the resulting space, searching for the coefficients of a small matrix. For the rank adaptive version of the unconventional integrator \cite{ceruti2022rank}, this translates to solving two matrix differential equation of sizes $m_1\times r, m_2 \times r$ in the first step and one matrix differential equation of size $2r\times 2r$ in the Galerkin step, followed by a truncated SVD step. The ideas in \cite{ceruti2022unconventional,ceruti2022rank} can be implemented for implicit methods because the matrix size is fixed in each subproblems, so a standard linear solver will suffice. However, the errors of both methods in \cite{ceruti2022unconventional,ceruti2022rank}  are subject to the tangent projection error or the so called \emph{modeling error} \cite{kieri2019projection}.

It is important to understand that this modeling error is present in many problems and can, as we will see below, result in non convergent numerical methods. We now describe a trivial but prototypical example that will make the DLRA/BUG method fail. Assume that $X(t) = U \Sigma V^T$ and that the column and row space of the matrix $F(X(t),t)$ are both (respectively) orthogonal to $U$ and $V$, then $\Pi_{X(t)} F(X(t),t)=0,$ which means the solution to the DLRA description will remain stationary. For example, this will happen for the numerical discretization when $X(t)$ is a  rank 1 even function in both variables (say $X(t,x_1,x_2) = \exp(-x_1^2) \exp(-x_2^2)$) on a square domain with center $(0,0)$, and  $F(X(t),t) = AXB^T$, with $A = {\sf diag}(x_1)$ and $B={\sf diag}(x_2)$. 

The main contribution of this paper is to propose a simple improvement to the BUG integrator that enhances the robustness with respect to convergence. In our method, we merge the row and column spaces from the explicit step truncation method with the BUG spaces in the prediction step to alleviate the tangent projection error associated with DLRA. As the BUG spaces can be  expensive to compute when implicit schemes are used, we propose an adaptive acceleration strategy based on the residual of the underlying classic scheme. The strategy is straightforward, if the residual obtained when {\bf only} the step truncation spaces are used, is larger than the truncation threshold then we add in the BUG spaces. We observe that this strategy is robust in convergence and can reduce the computational time for moderately stiff problems.  

The rest of the paper is organized as follows. Section \ref{sec:background} reviews the background on low rank time integrators.
In Section \ref{sec:implicit}, we describe the proposed schemes and perform numerical analysis of their properties.
 Section \ref{sec:numerical} provides numerical experiments and in Section \ref{sec:conclusion} we draw conclusions.

\section{Background review}
\label{sec:background}
In this section, we will  review basic concepts of low rank time stepping methods for \eqref{eqn:mode}. In Section \ref{sec:preliminary}, we gather the notations used in the paper. Section \ref{sec:explicit} reviews an explicit scheme truncation method based on the forward Euler method, while Section \ref{sec:dlra} reviews the rank adaptive BUG scheme.
\subsection{Notations and preliminaries}
\label{sec:preliminary}
In this paper, we use $\|\cdot\|$ to denote the matrix Frobenius norm. The Frobenius norm is a natural choice for matrix functions as $\|A\|$ coincides with the $L^2$ vector norm of the vectorized matrix ${\sf vec}(A),$ which is commonly used in the analysis of the standard ODE/PDE solvers. We also use matrix inner product defined as follows, for $A, B  \in \mathbb{R}^{m_1\times m_2},$ we define $\langle A, B \rangle =\sum_{i,j=1}^{m_1, m_2} A_{ij} B_{i,j}.$ Then $\langle A, A \rangle =\|A\|^2.$ The following property, 
\begin{equation}
\label{eqn:innerp}
\langle A, U S V^T \rangle = \langle U^T A V, S   \rangle, 
\end{equation}
holds for any $U \in \mathbb{R}^{m_1\times r}, S \in \mathbb{R}^{r\times s}, V \in \mathbb{R}^{m_2\times s}$. Here we stop to caution the reader that throughout this paper a matrix $\Sigma, S,$ or $\mathcal{S}$ does {\it not} always represent the diagonal matrix holding the singular values of another matrix.

 The numerical solution at $t^n$ is denoted by $\hat{X}^n \in \mathbb{R}^{m_1 \times m_2}.$ In particular, this is a rank $r_n$ matrix with singular value decomposition (SVD) $\hat{X}^n=U^n \Sigma^n (V^n)^T,$ where $U^n \in \mathbb{R}^{m_1\times r_n}$, $V^n \in \mathbb{R}^{m_2\times r_n}$ have orthogonal columns, and $\Sigma^n={\sf diag}(\sigma_1, \cdots, \sigma_{r_n})$ is a $r_n\times r_n$ diagonal matrix with diagonal entries $\sigma_1\ge \cdots \ge \sigma_{r_n} >0.$ Here, the rank $r_n$ will be chosen adaptively by the numerical scheme so that $\hat{X}^n$ approximate the true solution $X(t^n)$ with a prescribed accuracy.  
 
 We use $\mathcal{T}_\epsilon$ to denote a generic matrix approximation operator with accuracy $\epsilon$ in the Frobenius norm, i.e. $\|A -\mathcal{T}_\epsilon(A)\| \le \epsilon.$  A prominent example is the  truncated SVD of a matrix.  Namely, given a generic rank $r$ matrix $A \in \mathbb{R}^{m_1\times m_2}$ with reduced SVD: $A=U\Sigma V^T,$ then $\mathcal{T}^{svd}_\epsilon(A) = U[:,1:s] \,{\sf diag}(\sigma_1, \cdots, \sigma_{s}) \,V[:,1:s]^T,$ where we have used the standard MATLAB notation for  submatrices and ${r}$ is chosen to be the smallest integer so that $(\sum_{j=s+1}^r \sigma_j^2)^{1/2} \le \epsilon.$ 
 
In this paper, we will also encounter low rank matrix sum operations. This frequently appearing operation in low rank methods can   be efficiently computed by Algorithm \ref{algo:msum}. The inputs to this algorithm are the SVD representations of the matrices $X_j=U_j \Sigma_j (V_j)^T, j=1\ldots m,$ and the output is the truncated SVD of their sum $\mathcal{T}^{sum}_\epsilon(\sum_{j=1}^m X_j)=\mathcal{U}\mathcal{S}\mathcal{V}^T.$
We first write $U=[U_1, \ldots, U_m],$ $\Sigma={\sf diag}(\Sigma_1, \ldots, \Sigma_m),$ $V=[V_1, \ldots, V_m]$. Then use the  column-pivoted QR decomposition, \cite{golub2013matrix}, denoted by \mbox{$[Q, R, \Pi]={\sf qr}(A)$}, which computes $QR\Pi=A.$ In this factorization the columns of $Q$ are orthogonal as usual, but the introduction of the permutation matrix $\Pi$, makes it possible to guarantee that the diagonal entries of the upper triangular matrix $R$ are strictly decreasing in magnitude. Assume that the column pivoted QR procedure applied to to $U$ and $V$ yields the factorizations $Q_1R_1\Pi_1$, and $Q_2R_2\Pi_2$. Then the truncated matrix sum 
$
\mathcal{T}^{sum}_\epsilon(\sum_{j=1}^m X_j)=Q_1 \mathcal{U}\mathcal{S}(Q_2 \mathcal{V})^T.
$
is obtained by a truncated SVD on the small matrix $\mathcal{T}^{svd}_{\epsilon}(R_1 \Pi_1 \, \Sigma \, \Pi_2^T R_2^T)=\mathcal{U}\mathcal{S}\mathcal{V}^T$. 

 \begin{algorithm}[htbp]
    \SetKwFunction{isOddNumber}{isOddNumber}
    \SetKwInOut{KwIn}{Input}
    \SetKwInOut{KwOut}{Output}
      \SetKwInOut{KwPara}{Parameter}

    \KwIn{low rank matrices $X_j, j=1\ldots m$ or their SVD $U_j \Sigma_j (V_j)^T, j=1\ldots m$}
    \KwOut{truncated SVD of their sum $\mathcal{T}^{sum}_\epsilon(\sum_{j=1}^m X_j)=\mathcal{U}\mathcal{S}\mathcal{V}^T$}
	\KwPara{tolerance $\epsilon$}
 	Form $U=[U_1, \ldots, U_m], \, \Sigma={\sf diag}(\Sigma_1, \ldots, \Sigma_m), \,V=[V_1, \ldots, V_m]$.

	Perform column pivoted QR: $ [Q_1, R_1, \Pi_1]={\sf qr}(U),$ $[Q_2, R_2, \Pi_2]={\sf qr}(V).$
	
	Compute the truncated SVD:  $\mathcal{T}^{svd}_{\epsilon}(R_1 \Pi_1 \,\Sigma \, \Pi_2^T R_2^T)=\mathcal{U} \mathcal{S} \mathcal{V}^T$.
	
	Form $\mathcal{U} \leftarrow Q_1 \mathcal{U}, \mathcal{V} \leftarrow Q_2 \mathcal{V}$.
	
	Return $[\mathcal{U}, \mathcal{S}, \mathcal{V}]=\mathcal{T}^{sum}_\epsilon(\sum_{j=1}^m X_j)$
    \caption{Sum of low rank matrices}
    \label{algo:msum}
\end{algorithm}

\subsection{Explicit step truncation schemes}
\label{sec:explicit}
 The primary assumption to guarantee the efficiency of any low rank solver is that $r_n \ll m_1,  r_n \ll m_2,$ i.e. the solution can be well approximated by a low rank matrix. 
 When such assumptions hold, the main steps of the numerical algorithm should specify the evolution of $U^n, \Sigma^n, V^n$ based on the matrix differential equation \eqref{eqn:mode}. For explicit schemes, a simple approach is to  numerically integrate $\hat{X}^n$ according to a standard time integrator to $t^{n+1}$ and then perform a truncated SVD according to the error threshold to obtain $\hat{X}^{n+1}.$ This is the so-called step truncation method. Methods of this type and their variations have been discussed in \cite{kieri2019projection,dektor2021rank}.  
The main steps of the rank adaptive forward Euler time scheme are highlighted in Algorithm \ref{algo:fe}.
The first step of Algorithm \ref{algo:fe} is a standard forward Euler step for \eqref{eqn:mode} with the truncated right hand side in Line \ref{a1line1}. This will result in a numerical solution $\hat{X}^{n+1,pre}$ with higher rank than needed.  
 Then, the second step in Line \ref{a1line2} is to truncate the solution to a lower rank matrix.   In both steps, we use Algorithm \ref{algo:msum} to perform the sum of the low rank matrices involved.

\begin{algorithm}[htbp]
    \SetKwFunction{isOddNumber}{isOddNumber}
    \SetKwInOut{KwIn}{Input}
    \SetKwInOut{KwOut}{Output}
      \SetKwInOut{KwPara}{Parameter}

    \KwIn{numerical solution at $t^n:$  rank $r_n$ matrix $\hat{X}^n$ in its SVD form $U^n \Sigma^n (V^n)^T.$}
    \KwOut{numerical solution at $t^{n+1}:$  rank $r_{n+1}$ matrix $\hat{X}^{n+1}$ in its SVD form $U^{n+1} \Sigma^{n+1} (V^{n+1})^T.$}
	\KwPara{time step $\Delta t$, error tolerance $\epsilon_1, \epsilon_2$}
{\bf (Evolution).}  $\hat{X}^{n+1,pre}=\hat{X}^{n}+\Delta t \mathcal{T}^{sum}_{\epsilon_1}(F(\hat{X}^n,t^n)).$
\label{a1line1}

	 {\bf (Truncation).}  $\hat{X}^{n+1}=\mathcal{T}^{sum}_{\epsilon_2} (\hat{X}^{n+1,pre}).$
	 \label{a1line2}
    \caption{Forward Euler scheme   $t^n \rightarrow t^{n+1}$}
    \label{algo:fe}
\end{algorithm}

The convergence of this algorithm is well understood \cite{rodgers2023implicit}. The local truncation error is on the order of $O(\Delta t^2+\epsilon_1 \Delta t +\epsilon_2).$ If one choose $\epsilon_1=O(\Delta t), \epsilon_2=O(\Delta t^2),$ we will obtain a first order accurate solution. 
As for computational cost, we can see that the procedure involves QR factorization of tall matrices and SVD for a small matrix. If we assume $r_n =O(r),$ $s$ is small (i.e. $s=O(1)$) and $G$ has low rank (i.e. ${\sf rank}(G)=O(r)$),    the computational complexity is on the order of $O(m_1 r^2 +m_2 r^2 +r^3).$  Algorithm \ref{algo:fe}  can be readily extended to higher order by embedding in Runge-Kutta, multistep methods,  and was also considered  with tangent projection in the projected Runge-Kutta schemes \cite{kieri2019projection,dektor2021rank,guo2022low}.

\subsection{Dynamic low rank approximation}
\label{sec:dlra}

The DLRA modifies the equation, and  solves
\begin{equation}
\label{eq:DLRAF}
\frac{d}{dt}X(t)=\Pi_{X(t)} F(X(t), t),  
\end{equation}
where $\Pi_{X(t)}$ is the orthogonal projection onto the tangent space $T_{X(t)}\mathcal{M}_r$ of the the rank $r$ matrix manifold
$\mathcal{M}_r=\{X \in  \mathbb{R}^{m_1\times m_2}, {\sf rank}(X)=r\}$ at $X(t).$ The DLRA is particularly suited for a fixed rank calculation, if the computation is constrained on $\mathcal{M}_r.$ It has found success in many applications, including extensions to various tensor formats  \cite{lubich2018time, lubich2015time} and particularly in quantum mechanics \cite{paeckel2019time}.

We reminder the reader that if the SVD of $X \in \mathcal{M}_r$ is given by $U \Sigma V^T,$ then the tangent space of $\mathcal{M}_r$ at $X$ is given by
\[
T_{X}\mathcal{M}_r=\left \{ [ U \quad U_\perp] \begin{bmatrix}
\mathbb{R}^{r \times r} & \mathbb{R}^{r \times (m_2-r)} \\
\mathbb{R}^{(m_1-r)\times r} & 0^{(m_1-r) \times (m_2-r)} 
\end{bmatrix} [V \quad V_\perp]^T\right \},
\]
where $U_\perp, V_\perp$ are orthogonal complements of $U, V$ in $\mathbb{R}^{m_1}, \mathbb{R}^{m_2},$ respectively.
As we can see later in the PDE examples, typically if there are cross terms  like $F(X)=AXB$ (e.g. cross derivatives or variable coefficient problem with operators acting on both left and right side of the matrix), then in general $AXB \notin T_{X}\mathcal{M}_r.$ In this case, the \emph{modeling error} \cite{kieri2019projection} (i.e. the error from the tangent projection, which is $\|(I-\Pi_{X(t)})F(X,t)\|$) will be evident in the numerical approximation, see for example error estimates in \cite{ceruti2022unconventional,ceruti2022rank}. 

Below we will review the rank adaptive BUG integrator  in \cite{ceruti2022rank} for the discretization of \eqref{eq:DLRAF}. 
  First, two subproblems that come from the projector splitting of DLRA \cite{lubich2014projector} are solved in the $K-$ and $L-$step. Then, the solutions to the two subproblems are used to update the column and row spaces of the matrix; and a Galerkin evolution step is performed in the resulting space, searching for the optimal solution with the Galerkin condition. The idea of finding the appropriate subspace and computing the solution in that space is similar to a widely used technique in numerical linear algebra called projection methods \cite{saad2003iterative}. %
  
  For completeness, we describe the rank adaptive BUG integrator in Algorithm \ref{algo:dlra}. The original algorithm is written in a time continuous format, but here we present it using an implicit Euler discretization of the $K-, L-,S-$steps to facilitate the discussion in the remaining part of the paper.

\begin{algorithm}[htbp]
    \SetKwFunction{isOddNumber}{isOddNumber}
    \SetKwInOut{KwIn}{Input}
    \SetKwInOut{KwOut}{Output}
      \SetKwInOut{KwPara}{Parameter}
    \KwIn{numerical solution at $t^n:$  rank $r_n$ matrix $\hat{X}^n$ in its SVD form $U^n \Sigma^n (V^n)^T.$}
    \KwOut{numerical solution at $t^{n+1}:$  rank $r_{n+1}$ matrix $\hat{X}^{n+1}$ in its SVD form $U^{n+1} \Sigma^{n+1} (V^{n+1})^T.$}
	\KwPara{time step $\Delta t$, error tolerance $\epsilon_2$}  
	{\bf (Prediction).}   $K-$step and $L-$step integrating from $t^n$ to $t^{n+1}$.
	Solve $$ K^{n+1} - K^{n}=\Delta t F(K^{n+1} (V^n)^T,t) V^n, \quad K^n=U^n \Sigma^n,$$
	to obtain $K^{n+1},$ and $[\tilde{U},\sim,\sim]= {\sf qr}([U^n,  K^{n+1}]).$
	Solve $$ L^{n+1}-L^n=\Delta t F(U^n (L^{n+1})^T,t)^T U^n, \quad L^n=V^n \Sigma^n,$$
	to obtain $L^{n+1},$ and  $[\tilde{V},\sim,\sim]= {\sf qr}([V^n, L^{n+1}]).$  
	
{\bf (Galerkin Evolution).}  $S-$step: solve for $S^{n+1} $ from  
$$ S^{n+1}-S^{n}= \Delta t \tilde{U}^T F(\tilde{U} S^{n+1} \tilde{V}^T, t) \tilde{V}, \quad S^n= \tilde{U}^T U^n \Sigma^n (V^n)^T \tilde{V},$$
to obtain $S^{n+1}.$

	 {\bf (Truncation).}  $\hat{X}^{n+1}=\tilde{U}\mathcal{T}_{\epsilon_2}^{svd}(S^{n+1})\tilde{V}^T = U^{n+1} \Sigma^{n+1} (V^{n+1})^T.$
 
    \caption{Rank adaptive BUG integrator using implicit Euler \cite{ceruti2022rank}.}
    \label{algo:dlra}
\end{algorithm}

\section{Numerical method}
\label{sec:implicit}
We now present the proposed numerical methods, the Merge method and its adapted version. We first describe the algorithms, and then discuss the properties and the rationale behind the design of the schemes. %

\subsection{The Merge method}
In the Merge method, we merge the column and row spaces from the explicit step truncation method and the spaces from the $K-$ and $L-$ steps in the BUG method. As mentioned above the method consists of three stages. 
  
{\bf Prediction:} Our first ingredient in the method is to predict the column and row space using the explicit Euler scheme in combination with the BUG prediction spaces from Algorithm \ref{algo:dlra}.  Precisely, the predicted column and row spaces are defined as the column and row spaces of the collection of matrices
\[
[\hat{X}^n, \, \mathcal{T}_{\epsilon_1}(F(\hat{X}^n,t^n)), \, K^{n+1} (L^{n+1})^T].
\]
Here the truncation level $\epsilon_1$ can be set to zero (no truncation used) or chosen as $\epsilon_1 = C_1 \Delta t$. Both choices gives first order accurate solutions but depending on the form of (\ref{eqn:mode}) the added cost from the  $\epsilon_1 = C_1 \Delta t$ truncation may be offset by the reduction in the dimension of the predicted  row and column spaces. %
To complete the next step we first orthogonalize the predicted column and row spaces. That is, we use column pivoted QR to find orthogonal matrices $\tilde{U} \in \mathbb{R}^{m_1\times s_1}, \tilde{V} \in \mathbb{R}^{m_2\times s_2}$ spanning these spaces. Here $s_1, s_2$ is bounded from above by $2r_n(s+1)+r_G,$ where $r_G$ is the rank of the source $G(t^n).$ 

{\bf Galerkin evolution:} The Galerkin evolution step can be understood as to find an approximation $\hat{X}^{n+1, pre} \in W_{\tilde{U},\tilde{V}},$ such that
\begin{equation}
\label{eqn:galerkin}
\langle \hat{X}^{n+1,pre}, A \rangle = \langle \hat{X}^{n}+ \Delta t F(\hat{X}^{n+1,pre}, t^{n+1}), A \rangle, \qquad \forall A \in W_{\tilde{U},\tilde{V}},
\end{equation}
where $W_{\tilde{U},\tilde{V}}=\{ A \in \mathbb{R}^{m_1\times m_2}: A=\tilde{U} \Sigma \tilde{V}^T, \textrm{with } \Sigma \in \mathbb{R}^{s_1\times s_2} \}$ denote all size $m_1 \times m_2$ spaces with column and row spaces as $\tilde{U}$ and $\tilde{V}.$ 

To solve this problem we let $\hat{X}^{n+1,pre}=\tilde{U} \tilde{\Sigma} \tilde{V}^T, A= \tilde{U}  \Sigma^* \tilde{V}^T.$ Then by \eqref{eqn:innerp}, we get
$$
\langle \tilde{\Sigma}, \Sigma^* \rangle = \langle \tilde{U}^T(\hat{X}^{n}+\Delta t  F(\tilde{U}\tilde{\Sigma}\tilde{V}^T,t^{n+1})))\tilde{V}, \Sigma^* \rangle, \qquad \forall \Sigma^* \in \mathbb{R}^{s_1\times s_2}.
$$
This means 
\begin{eqnarray}
\label{eqn:gebef}
\tilde{\Sigma}&=&\tilde{U}^T(\hat{X}^{n}+\Delta t  F(\tilde{U}\tilde{\Sigma}\tilde{V}^T,t^{n+1})))\tilde{V} \nonumber\\
&=& \tilde{U}^TU^n \Sigma^n (V^n)^T\tilde{V}^T+ \Delta t  \sum_{j=1}^s \tilde{U}^T A_j \tilde{U}\tilde{\Sigma}\tilde{V}^T B_j^T \tilde{V}+ \Delta t  \tilde{U}^T U^n_G \Sigma^n_G (V^n_G)^T\tilde{V}.
\label{eqn:steinbe}
\end{eqnarray} 
The equation \eqref{eqn:steinbe} is a linear matrix equation (generalized Sylvester  equation) for the unknown $\tilde{\Sigma}.$ Since the problems we are considering have low rank solutions, we expect the dimensions of the matrix $\tilde{\Sigma}$ to be small. To solve this equation one can either use a direct solves or an iterative solver (e.g. GMRES) \cite{simoncini2016computational}. %

{\bf Truncation:} The truncation step uses the truncated SVD of $\hat{X}^{n+1}=\mathcal{T}_{\epsilon_2}^{svd} (\tilde{U}\tilde{\Sigma}\tilde{V}^T)= \tilde{U}\mathcal{T}_{\epsilon_2}^{svd}(\tilde{\Sigma})\tilde{V}^T.$ Here the truncation level is chosen as $\epsilon_2 = C_2 \Delta t^2$ to ensure accuracy.

Algorithm \ref{algo:Merge} summarizes the Merge method. 

\begin{algorithm}[htbp]
    \SetKwFunction{isOddNumber}{isOddNumber}
    \SetKwInOut{KwIn}{Input}
    \SetKwInOut{KwOut}{Output}
      \SetKwInOut{KwPara}{Parameters}

    \KwIn{numerical solution at $t^n:$ rank $r_n$ matrix $\hat{X}^n$ in its SVD form $\hat{U}^n \hat{\Sigma}^n (\hat{V}^n)^T$.}
    \KwOut{numerical solution at $t^{n+1}:$  rank $r_{n+1}$ matrix $\hat{X}^{n+1}$ in its SVD form $U^{n+1} \Sigma^{n+1} (V^{n+1})^T.$}
	\KwPara{truncation tolerances $\epsilon_1, \epsilon_2$}  
	{\bf (Merge Prediction).} %
	Compute the truncated SVD of the right hand side: $[\mathcal{U}, \mathcal{S},\mathcal{V}]=\mathcal{T}^{sum}_{\epsilon_1}(F(\hat{X}^n,t^n)),$. 

Compute the BUG prediction spaces $K^{n+1}, L^{n+1}$ according to  Algorithm \ref{algo:dlra}.

Orthogonalize the merged spaces  by column pivoted QR to get the prediction spaces $[\tilde{U},\sim,\sim]= {\sf qr}([[\hat{U}^n, \mathcal{U}, K^{n+1}]),$ 
$[\tilde{V},\sim,\sim]= {\sf qr}([\hat{V}^n, \mathcal{V}^n, L^{n+1}]).$ 
	
{\bf (Galerkin Evolution).}  Find $\tilde{\Sigma}$ by solving $\tilde{\Sigma}=\tilde{U}^T(\hat{X}^{n}+\Delta t  F(\tilde{U}\tilde{\Sigma}\tilde{V}^T,t^{n+1}))\tilde{V}.$
 
 {\bf (Truncation).}  $\hat{X}^{n+1} = \hat{U}^{n+1} \hat{\Sigma}^{n+1} (\hat{V}^{n+1})^T=\tilde{U}\mathcal{T}_{\epsilon_2}^{svd}(\tilde{\Sigma})\tilde{V}^T.$
 \caption{Merge method $t^n \rightarrow t^{n+1}$ }
    \label{algo:Merge}
\end{algorithm}

  We now provide estimates of the computational cost of the different parts of the computation. Here we assume $m_1=m_2=m$, $s$ terms in differential equation and rank $r$. 
\begin{itemize}
\item An evaluation of $AU, (BV)^T$ costs $\mathcal{O}(mr)$ for each of $AU$, $BV$, assuming $A$ and $B$ are sparse.
\item The truncation of the right hand side $\mathcal{T}^{sum}_{\epsilon_1}(F)$ has two components. Two QR orthogonalizations, each costs $\mathcal{O}(2(m(rs)^2-\frac{(rs)^2}{3})$ and one SVD of the core dense matrix, this costs $\mathcal{O}((rs)^3)$.
\item The $K$ and $L$ solves  used to compute the BUG spaces  also require the solution of matrix equations but the dimension of these matrix solves are for a $m \times r$ matrix rather than a $sr \times sr$ matrix. For these equations we have found that applying GMRES in vectorized form works well. It should be noted that this is typically the most expensive part of the solvers. 

\item The cost of solving for the Galerkin core matrix $\tilde{\Sigma}$ in 
\[
\tilde{\Sigma}- \Delta t  \tilde{U}^T F(\tilde{U}\tilde{\Sigma}\tilde{V}^T,t^{n+1})\tilde{V}=\tilde{U}^T\hat{X}^{n}\tilde{V},
\]
depends on the algorithm used.  For example, if GMRES is applied on the vectorized version of this equation each evaluation of the left hand side costs $\mathcal{O}(m(rs)^2)$ if $F$ is a general function. If $F$ is in the form described in equation (\ref{eqn:mode}) it is possible to pre-compute dense $rs \times rs$ matrices (at a cost of $\mathcal{O}(ms(rs)^2)$) and bring down the cost of the evaluation of the left hand side to $\mathcal{O}(s(rs)^3)$ per evaluation.  

Neglecting the cost associated with the growth of the GMRES Krylov subspace this cost would then be multiplied with the number of iterations needed to converge (which is difficult to estimate). We have found that when the governing equation has elliptic terms it also works well to reformulate the Galerkin equation into a fixed point iteration
\[
A_2^{-1}(I- \tilde{U}^T\Delta t A_1\tilde{U}) \tilde{\Sigma}^{k} -  \tilde{\Sigma}^{k} (\Delta t B_2 \tilde{V})^T \tilde{V} B^{-1}_1 = P(\tilde{\Sigma}^{k-1},\hat{X}^{n},\tilde{U},\tilde{V}).
\]  
In the iteration we use a dense Sylvester solver with cost $\mathcal{O}((rs)^3)$ for each iteration. When the matrices $A_1$ and $B_2$ correspond to approximations to second derivatives in the 1 and 2 direction and $A_2$ and $B_1$ are identity or diagonal positive definite matrices we find that this iteration converges to machine precision in a handful of iterations.    
\end{itemize} 

Finally, we would like to comment on the rank evolution of the numerical solution. We find in our numerical experiments that the low rank method generally track the rank growth of the implicit Euler scheme well, i.e. the rank of $\hat{X}^n$ is on par with the rank of the implicit Euler solution at $t^n.$ However, due to the merge in the prediction step, the predicted rank is larger than the rank of the BUG space (which is equal to $2r$). This do translate to larger computational cost in the Galerkin step compared to the BUG solver. However we note that the upper bound in the BUG space $2r$ can be insufficient to capture rapidly changing initial layers in the PDEs as pointed out in  \cite{Nakao:2023aa} (where it was observed a larger than actual rank needs to be imposed for the numerical initial solution). For the numerical experiments we performed in this paper, we find that by merging with the spaces generated by the explicit schemes, this issue seems to be addressed and we do not need to impose large artificial rank for the numerical initial condition.

\subsection{The Merge-adapt method}
As will be shown in Lemma \ref{lemma:lte1}, the column and row spaces generated from the current solution $\hat{U}^n, \hat{V}^n$ combined with $\mathcal{U}$ and $\mathcal{V}$ in $[\mathcal{U}, \mathcal{S},\mathcal{V}]=\mathcal{T}^{sum}_{\epsilon_1}(F(\hat{X}^n,t^n)),$ will be sufficiently accurate to yield a first order accurate solution as long as the time step is sufficiently small compared to the Lipschitz constant $L$ of $\|F\|$. This time step restriction can be overcome by merging with the BUG spaces $K^{n+1}, L^{n+1}$, which are computed useing an implicit solver and can handle the stiffness as shown in the Merge method in Algorithm \ref{algo:Merge}. However, since the computation of $K^{n+1}$ and $L^{n+1}$ requires {solving a linear system}, it is preferable to only add those when necessary. This motivates the design of the Merge-adapt method, which is described in Algorithm \ref{algo:Merge-Adapt}. In Steps 1-4 in Algorithm \ref{algo:Merge-Adapt}, we perform a calculation based purely on  column and row spaces generated by the explicit scheme for the term $\mathcal{T}^{sum}_{\epsilon_1}(F(\hat{X}^n,t^n)).$ In Step 5, we do a residual check, and if this fails, the method will fall back to the Merge method as shown in Steps 6-8. Otherwise, we proceed to the next time step.

 We, heuristically, argue that for this residual check can be used to robustly maintain first order accuracy. For simplicity, assume that the differential equation we would like to solve is 
\[
\frac{d}{dt}X(t) = AX(t)B^T + G(t).
\] 
Let  $X^{n+1}$ be the solution obtained by the classic implicit Euler method, i.e. it solves
\[
{\sf vec}(X^{n+1}) = \underbrace{(I - \Delta t (B \otimes A))^{-1}}_{C} {\sf vec}(X^{n}) + \Delta t  \, {\sf vec}(G(t_{n+1})).
\] 
{Here $\otimes$ denotes Kronecker product and we have introduced the matrix $C$ for notational convenience}   
The low rank solution is not, in general, expected to solve this equation but we may introduce the residual $\hat{R}^{n+1}$ so that 
\[
{\sf vec}(\hat{X}^{n+1}) = \underbrace{(I - \Delta t (B \otimes A))^{-1}}_{C} {\sf vec}(\hat{X}^{n}) + \Delta t  \, {\sf vec}(G(t_{n+1})) + {\sf vec}(\hat{R}^{n+1}).
\] 
Combining the equations we find 
\[
{\sf vec}(\hat{X}^{n+1}) - {\sf vec}(X^{n+1}) = C [ {\sf vec}(\hat{X}^{n}) -{\sf vec}(X^{n})] + {\sf vec}(\hat{R}^{n+1}).
\] 

Now if we assume $\|C\| \le 1$ and the residual check pass, then the error between the implicit Euler solution and the implicit adaptive low rank solution $e^n={\sf vec}(\hat{X}^{n}) -{\sf vec}(X^{n})$ satisfies $\|e^{n+1}\|\le \|e^n\|+ C_2 \Delta t^2.$ This is enough to guarantee that the low rank solution is convergent because the implicit Euler solution is first order in time. It is reasonable to assume that the condition $\|C\| \le 1$ is satisfied for most discretizations of diffusion or advection-diffusion equations \cite{gustafsson2013time}.

\begin{algorithm}[h!]
 \SetKwFunction{isOddNumber}{isOddNumber}
 \SetKwInOut{KwIn}{Input}
 \SetKwInOut{KwOut}{Output}
 \SetKwInOut{KwPara}{Parameters}
 \KwIn{numerical solution at $t^n:$ rank $r_n$ matrix $\hat{X}^n$ in its SVD form $\hat{U}^n \hat{\Sigma}^n (\hat{V}^n)^T$.}
 \KwOut{numerical solution at $t^{n+1}:$  rank $r_{n+1}$ matrix $\hat{X}^{n+1}$ in its SVD form $U^{n+1} \Sigma^{n+1} (V^{n+1})^T.$}
 \KwPara{truncation tolerances $\epsilon_1, \epsilon_2$}  
 {\bf (Cheap Prediction).} Compute a first order prediction of the column and row spaces of $\hat{X}^{n+1}$.  
 Compute the truncated SVD of the right hand side: $[\mathcal{U}, \mathcal{S},\mathcal{V}]=\mathcal{T}^{sum}_{\epsilon_1}(F(\hat{X}^n,t^n))$. 

Orthogonalize by column pivoted QR to get the prediction spaces $[\tilde{U},\sim,\sim]= {\sf qr}([[\hat{U}^n, \mathcal{U}]),$ 
$[\tilde{V},\sim,\sim]= {\sf qr}([\hat{V}^n, \mathcal{V}^n]).$ 
	
{\bf (Galerkin Evolution).}  Find $\tilde{\Sigma}$ by solving $\tilde{\Sigma}=\tilde{U}^T(\hat{X}^{n}+\Delta t  F(\tilde{U}\tilde{\Sigma}\tilde{V}^T,t^{n+1}))\tilde{V}.$
 
 {\bf (Truncation).}  $\hat{X}^{n+1} = \hat{U}^{n+1} \hat{\Sigma}^{n+1} (\hat{V}^{n+1})^T=\tilde{U}\mathcal{T}_{\epsilon_2}^{svd}(\tilde{\Sigma})\tilde{V}^T.$

{\bf (Residual Check).} Compute $\hat{R}^{n+1} = \hat{X}^{n+1} - \hat{X}^{n} -  \Delta t  F(\hat{X}^{n+1},t^{n+1})$. 
If $ \| \hat{R}^{n+1} \| < \epsilon_2$ return the solution $\hat{X}^{n+1}$. If not

{\bf (Merge Predicion).} Compute the BUG prediction spaces $K^{n+1}, L^{n+1}$ according to  Algorithm \ref{algo:dlra}.
Orthogonalize the merged spaces by column pivoted QR: $[\tilde{U},\sim,\sim]= {\sf qr}([[\hat{U}^n, \mathcal{U}, K^{n+1}]),$ 
$[\tilde{V},\sim,\sim]= {\sf qr}([\hat{V}^n, \mathcal{V}^n, L^{n+1}]).$

{\bf (Galerkin Evolution).}  Find $\tilde{\Sigma}$ by solving $\tilde{\Sigma}=\tilde{U}^T(\hat{X}^{n}+\Delta t  F(\tilde{U}\tilde{\Sigma}\tilde{V}^T,t^{n+1}))\tilde{V}.$
 
 {\bf (Truncation).}  $\hat{X}^{n+1} = \hat{U}^{n+1} \hat{\Sigma}^{n+1} (\hat{V}^{n+1})^T=\tilde{U}\mathcal{T}_{\epsilon_2}^{svd}(\tilde{\Sigma})\tilde{V}^T.$ 

 \caption{Merge-adapt method $t^n \rightarrow t^{n+1}$ }
    \label{algo:Merge-Adapt}
\end{algorithm}

We note that the acceleration of Algorithm \ref{algo:Merge-Adapt} over Algorithm \ref{algo:Merge} is based on the assumption that residual check passes in Step 5 and the BUG spaces are not computed. If the residual check fails, extra computation is performed in Steps 3-5 which may incur more computational cost. We observe that, for moderately stiff problems with small $s$, the Merge-adapt method has computational advantages. More discussion is provided in the numerical experiment section.

\subsection{Analysis}
\label{sec:analysis}
In this section, we analyze properties of the proposed methods. %

\subsubsection{Stability}
\label{sec:stability}

First, we consider conservative or dissipative systems for which %
 $\langle F(X, t), X\rangle \le 0, \forall t, X.$ Then it follows that
$$
\frac{d}{dt}\|X\|^2=\langle F(X, t), X\rangle \le 0,
$$
that is, the energy ($L^2$ norm) is monotonically decreasing.

\begin{theorem} 
\label{thm:diss}
If we have  $\langle F(X, t), X\rangle \le 0, \forall t, X ,$ then the numerical solutions from Algorithm \ref{algo:Merge} or \ref{algo:Merge-Adapt} satisfy
$$
\|\hat{X}^{n+1}\| \le \|\hat{X}^{n}\|. 
$$
\end{theorem}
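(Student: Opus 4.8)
## Proof Strategy

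The plan is to exploit the Galerkin condition \eqref{eqn:galerkin} together with the truncation step, and to mimic the standard energy argument for implicit Euler. The key observation is that the numerical solution before truncation, $\hat{X}^{n+1,pre} = \tilde{U}\tilde{\Sigma}\tilde{V}^T$, lives in the space $W_{\tilde{U},\tilde{V}}$ and satisfies \eqref{eqn:galerkin}. Since $\hat{X}^{n+1,pre}$ is itself an admissible test function $A$ in \eqref{eqn:galerkin}, I would first substitute $A = \hat{X}^{n+1,pre}$ to obtain
\[
\|\hat{X}^{n+1,pre}\|^2 = \langle \hat{X}^n, \hat{X}^{n+1,pre}\rangle + \Delta t\, \langle F(\hat{X}^{n+1,pre}, t^{n+1}), \hat{X}^{n+1,pre}\rangle.
\]
By the dissipativity hypothesis $\langle F(X,t), X\rangle \le 0$ applied with $X = \hat{X}^{n+1,pre}$, the last term is $\le 0$, so $\|\hat{X}^{n+1,pre}\|^2 \le \langle \hat{X}^n, \hat{X}^{n+1,pre}\rangle \le \|\hat{X}^n\|\,\|\hat{X}^{n+1,pre}\|$ by Cauchy–Schwarz. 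Dividing through (the case $\hat{X}^{n+1,pre}=0$ being trivial) gives $\|\hat{X}^{n+1,pre}\| \le \|\hat{X}^n\|$.

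The second step is to handle the truncation. We have $\hat{X}^{n+1} = \tilde{U}\,\mathcal{T}^{svd}_{\epsilon_2}(\tilde{\Sigma})\,\tilde{V}^T$. Because $\tilde{U}$ and $\tilde{V}$ have orthonormal columns, $\|\hat{X}^{n+1}\| = \|\mathcal{T}^{svd}_{\epsilon_2}(\tilde{\Sigma})\|$ and $\|\hat{X}^{n+1,pre}\| = \|\tilde{\Sigma}\|$, and truncating singular values can only decrease the Frobenius norm: $\|\mathcal{T}^{svd}_{\epsilon_2}(\tilde{\Sigma})\| \le \|\tilde{\Sigma}\|$ since the truncated SVD simply zeroes out some singular values. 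Chaining the two inequalities yields $\|\hat{X}^{n+1}\| \le \|\hat{X}^{n+1,pre}\| \le \|\hat{X}^n\|$, which is the claim. For the Merge-adapt method (Algorithm \ref{algo:Merge-Adapt}), the argument is identical: whichever branch is taken (cheap prediction only, or after the residual-check fallback to the full Merge prediction), the final solution is still obtained by a Galerkin solve of the form \eqref{eqn:galerkin} in some space $W_{\tilde{U},\tilde{V}}$ followed by an $\epsilon_2$-truncation, so the same two-step bound applies verbatim.

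I do not anticipate a genuine obstacle here — the proof is short and structural. The only point requiring a little care is making sure the Galerkin solution $\hat{X}^{n+1,pre}$ actually exists and that substituting it as a test function is legitimate (it is, since $W_{\tilde{U},\tilde{V}}$ is a linear space containing $\hat{X}^{n+1,pre}$ by construction), and noting that the dissipativity assumption is used at the \emph{new} time level $t^{n+1}$ with the \emph{predicted} iterate, which is exactly where the implicit nature of the scheme pays off. One should also remark that no assumption on $\Delta t$ is needed, reflecting the unconditional stability typical of implicit Euler-type discretizations.
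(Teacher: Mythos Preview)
Your proposal is correct and essentially identical to the paper's proof: both test the Galerkin relation with $A=\hat{X}^{n+1,pre}$ (the paper does this at the level of $\tilde{\Sigma}$ via \eqref{eqn:gebef} and \eqref{eqn:innerp}, which is the same computation), apply the dissipativity hypothesis and Cauchy--Schwarz to get $\|\hat{X}^{n+1,pre}\|\le\|\hat{X}^n\|$, and then use that SVD truncation does not increase the Frobenius norm. The remark that the argument applies verbatim to Algorithm~\ref{algo:Merge-Adapt} matches the paper as well.
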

\begin{proof} For Algorithm  \ref{algo:Merge}, first note that $\|X^{n+1,pre}\|=\|\tilde{U}\tilde{\Sigma}\tilde{V}^T\|=\|\tilde{\Sigma}\|.$ By \eqref{eqn:gebef} and \eqref{eqn:innerp},
\begin{eqnarray*}
&&\|\tilde{\Sigma}\|^2=\langle \tilde{\Sigma}, \tilde{\Sigma}\rangle =\langle \tilde{U}^T(\hat{X}^{n}+\Delta t  F(\tilde{U}\tilde{\Sigma}\tilde{V}^T, t^{n+1}))\tilde{V} , \tilde{\Sigma}\rangle \\
&&= \langle  \hat{X}^{n}+\Delta t  F(\tilde{U}\tilde{\Sigma}\tilde{V}^T, t^{n+1}) , \tilde{U}\tilde{\Sigma} \tilde{V}^T\rangle \\
&&\le \langle  \hat{X}^{n} , \tilde{U}\tilde{\Sigma} \tilde{V}^T\rangle =  \langle \hat{X}^{n} , X^{n+1,pre} \rangle.
\end{eqnarray*}
Therefore, by Cauchy-Schwarz inequality, $\|X^{n+1,pre}\|^2 \le  \|\hat{X}^{n} \| \|X^{n+1,pre}\|,$ which gives  $\|X^{n+1,pre}\| \le  \|\hat{X}^{n} \|.$ Finally, because of the property of the truncated SVD, $\|\hat{X}^{n+1}\|=\|\mathcal{T}_{\epsilon_2}^{svd} (X^{n+1,pre})\| \le \|X^{n+1,pre}\| \le  \|\hat{X}^{n} \|.$ The proof for Algorithm   \ref{algo:Merge-Adapt} is the same and is omitted.
\end{proof}

We can easily generalize this result to semi-bounded operator \cite{gustafsson2013time}.
Specifically, if $\langle F(X, t), X\rangle \le \alpha \|X\|^2, \forall \, t, X,$ then
$$
\frac{d}{dt}\|X\|^2=\langle F(X, t), X\rangle \le \alpha \|X\|^2,
$$
which implies $\|X(t)\| \le e^{\alpha t} \|X(0)\|.$
\begin{theorem}[Stability for semi-bounded operator] 
\label{thm:stab}
If we have $\langle F(X, t), X\rangle \le \alpha \|X\|^2, \forall \, t, X$ then the numerical solutions from Algorithm \ref{algo:Merge} or \ref{algo:Merge-Adapt} satisfy 
$$
\|\hat{X}^{n+1}\| \le e^{\alpha \Delta t} \|\hat{X}^{n}\|, \quad \|\hat{X}^{n}\| \le e^{\alpha \, t^n} \|\hat{X}^{0}\|, \quad \textrm{if} \,\, \alpha \Delta t \le 1.
$$
\end{theorem}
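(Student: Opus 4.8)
The plan is to mirror the argument of Theorem~\ref{thm:diss} almost verbatim, replacing the dissipativity bound $\langle F(X,t),X\rangle\le 0$ by the semi-bounded estimate $\langle F(X,t),X\rangle\le\alpha\|X\|^2$, and tracking the resulting constant. First I would start from the Galerkin evolution step of Algorithm~\ref{algo:Merge}, noting as before that $\|X^{n+1,pre}\|=\|\tilde U\tilde\Sigma\tilde V^T\|=\|\tilde\Sigma\|$, and using \eqref{eqn:gebef} together with the inner-product identity \eqref{eqn:innerp} to write
\[
\|\tilde\Sigma\|^2=\langle \hat X^n,\tilde U\tilde\Sigma\tilde V^T\rangle+\Delta t\,\langle F(\tilde U\tilde\Sigma\tilde V^T,t^{n+1}),\tilde U\tilde\Sigma\tilde V^T\rangle.
\]
Now I apply the semi-bounded hypothesis with $X=\tilde U\tilde\Sigma\tilde V^T=X^{n+1,pre}$ to bound the second term by $\alpha\Delta t\,\|X^{n+1,pre}\|^2$, and Cauchy--Schwarz on the first term by $\|\hat X^n\|\,\|X^{n+1,pre}\|$. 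This gives $(1-\alpha\Delta t)\|X^{n+1,pre}\|^2\le\|\hat X^n\|\,\|X^{n+1,pre}\|$, hence $\|X^{n+1,pre}\|\le(1-\alpha\Delta t)^{-1}\|\hat X^n\|$, provided $1-\alpha\Delta t>0$.

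The remaining step is to pass from the factor $(1-\alpha\Delta t)^{-1}$ to the claimed factor $e^{\alpha\Delta t}$. For $\alpha\le 0$ this is immediate since then $(1-\alpha\Delta t)^{-1}\le 1\le e^{\alpha\Delta t}$ fails in the wrong direction --- so more care is needed, and I expect this elementary inequality to be the only real subtlety. The clean route is to observe that for $0\le\alpha\Delta t\le 1$ one has $(1-\alpha\Delta t)^{-1}\le e^{\alpha\Delta t}$? This is in fact \emph{false} ($1/(1-x)\ge e^x$ for $x\in[0,1)$), so the honest statement is the reverse; I would therefore either (a) state the hypothesis-consistent bound $\|\hat X^{n+1}\|\le(1-\alpha\Delta t)^{-1}\|\hat X^n\|$ and then use $(1-x)^{-1}\le e^{2x}$ for $x\in[0,1/2]$ (or $\le e^{cx}$ on $[0,1]$ with a suitable constant) to recover an exponential-type bound, absorbing the constant, or (b) note that the theorem as displayed should be read with the understanding $\alpha\Delta t\le 1$ guaranteeing $(1-\alpha\Delta t)>0$ and the bound $e^{\alpha\Delta t}$ holding in the practically relevant regime $\alpha\ge0$ small, where $1-\alpha\Delta t\ge e^{-\alpha\Delta t}\cdot(1+O((\alpha\Delta t)^2))$. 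In the write-up I would use the standard estimate $1-x\ge e^{-x/(1-x)}$ or simply $(1-\alpha\Delta t)^{-1}\le e^{\alpha\Delta t/(1-\alpha\Delta t)}\le e^{2\alpha\Delta t}$ when $\alpha\Delta t\le 1/2$; the precise packaging is a matter of taste and does not affect the mechanism.

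Once the one-step bound $\|\hat X^{n+1}\|\le e^{\alpha\Delta t}\|\hat X^n\|$ is in hand (with whatever convention on the constant), the truncation step only decreases the norm, $\|\hat X^{n+1}\|=\|\mathcal T^{svd}_{\epsilon_2}(X^{n+1,pre})\|\le\|X^{n+1,pre}\|$, so it does not affect the estimate. Iterating over $n$ time steps and telescoping gives $\|\hat X^n\|\le e^{\alpha\Delta t\cdot n}\|\hat X^0\|=e^{\alpha t^n}\|\hat X^0\|$. For Algorithm~\ref{algo:Merge-Adapt} the argument is identical: whether the residual check passes (cheap prediction) or fails (merge prediction), the final update is obtained by exactly the same Galerkin-plus-truncation mechanism on the space $W_{\tilde U,\tilde V}$, and the Galerkin identity \eqref{eqn:gebef} holds verbatim regardless of how $\tilde U,\tilde V$ were constructed, so the same two inequalities apply. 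The main obstacle is thus purely the scalar inequality relating $(1-\alpha\Delta t)^{-1}$ and $e^{\alpha\Delta t}$ under the stated hypothesis $\alpha\Delta t\le 1$; everything else is a direct transcription of the proof of Theorem~\ref{thm:diss}.
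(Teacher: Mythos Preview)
Your approach is essentially identical to the paper's: the paper also derives $\|\hat X^{n+1}\|\le(1-\alpha\Delta t)^{-1}\|\hat X^n\|$ from the Galerkin identity and Cauchy--Schwarz, and then simply asserts that ``the theorem follows using a similar argument as in Theorem~\ref{thm:diss}.'' Your observation that $(1-\alpha\Delta t)^{-1}\ge e^{\alpha\Delta t}$ for $\alpha\Delta t\in(0,1)$ is correct and is a genuine imprecision that the paper does not address; the substantive conclusion (exponential-in-$t^n$ stability) is of course recoverable along the lines you sketch, but the displayed one-step bound with the exact constant $e^{\alpha\Delta t}$ is not what the argument actually yields.
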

\begin{proof} The proof is similar to the proof of Theorem \ref{thm:diss}, so we only highlight the difference.
We have 
$$\langle \hat{X}^{n+1}, \hat{X}^{n+1} \rangle \le  \langle \hat{X}^{n}, \hat{X}^{n+1} \rangle + \Delta t \alpha  \|\hat{X}^{n+1}\|^2,$$
which implies $$\|\hat{X}^{n+1}\| \le \frac{1}{1-\alpha \Delta t} \|\hat{X}^{n}\|,$$
if $\alpha \Delta t \le 1.$ The theorem follows using a similar arguments as in Theorem \ref{thm:diss}.
\end{proof}

\subsubsection{Convergence}
A convergence estimate has been shown in Theorem 2 of \cite{ceruti2022rank} for the rank adaptive BUG scheme. Assuming  the Lipschitz continuity and boundedness of the operator $F,$ the authors show that the BUG scheme has numerical error bounded by the sum of the initial numerical error, the tangent projection error, $\epsilon_2$ and first order in time error. 
The  proof is based on the time continuous version of the BUG schemes. We want to point out that for PDE applications, in general the (differential) operators are  not Lipschitz bounded. Nevertheless, it is still of theoretical interests to investigate the convergence properties of the schemes under such assumptions.
 
 For simplicity, below we will estimate the local truncation error of the first order implicit schemes with column and row space spanned by the cheap prediction space, i.e. we are investigating the Steps 1-4 in Algorithm \ref{algo:Merge-Adapt}. Since the Merge method uses a space that is union of the cheap prediction (as shown below) and the BUG space, the local truncation error of Algorithm \ref{algo:Merge} will be also be upper bounded by the estimate below.
\begin{lemma}[Local truncation error with cheap prediction space]
\label{lemma:lte1} Suppose $F$ is Lipschitz-continuous in both variables and bounded, i.e. there exists a constant $L$ such that $\|F(Y, s)-F(Z, t)\| \le L \|Y-Z\| +L|s-t|$ and $\|F(X, t)\| \le B$. 
If we denote $\mathcal{X}$ as the exact solution to \eqref{eqn:mode} with initial condition  $\hat{X}^n=U^n \Sigma^n (V^n)^T$ from $t^n$ to $t^{n+1}$,  $\hat{X}^{n+1}$ the numerical solution obtained from from Algorithm  \ref{algo:Merge-Adapt}, Steps 1-4 with the same initial condition  $\hat{X}^n,$ then for sufficiently small $\Delta t,$ we have the following error bound
\begin{equation}
\label{eqn:lte2}
\|\mathcal{X}-\hat{X}^{n+1}\| \le   C \Delta t^2 + 2 \Delta t \epsilon_1+  \epsilon_2.
\end{equation}
In %
 \eqref{eqn:lte2}, $C$ is a constant that only depends $L$ and $B$.
\end{lemma}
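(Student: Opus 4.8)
The plan is to compare the low-rank numerical solution $\hat{X}^{n+1}$ produced by Steps 1--4 of Algorithm \ref{algo:Merge-Adapt} against an intermediate quantity: the \emph{exact} Galerkin-in-space, implicit-Euler-in-time solution on the predicted subspace, and then compare that against the true solution $\mathcal{X}$. Concretely, I would introduce $Y^{n+1} := \tilde{U}\tilde{\Sigma}\tilde{V}^T \in W_{\tilde{U},\tilde{V}}$ defined exactly by the Galerkin equation \eqref{eqn:galerkin}, so that $\hat{X}^{n+1} = \mathcal{T}^{svd}_{\epsilon_2}(Y^{n+1})$ and hence $\|\hat{X}^{n+1} - Y^{n+1}\| \le \epsilon_2$ by the defining property of $\mathcal{T}_{\epsilon_2}$. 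This isolates the $\epsilon_2$ contribution immediately and reduces the problem to bounding $\|\mathcal{X} - Y^{n+1}\|$.

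Next I would bound $\|\mathcal{X} - Y^{n+1}\|$ in two pieces. Let $\Pi = \Pi_{\tilde{U},\tilde{V}}$ denote the orthogonal projection onto $W_{\tilde{U},\tilde{V}}$, i.e. $\Pi A = \tilde{U}\tilde{U}^T A \tilde{V}\tilde{V}^T$. The first piece is the \emph{consistency/projection} error: because the prediction space contains the column and row spaces of $\hat{X}^n$ and of $\mathcal{T}_{\epsilon_1}(F(\hat{X}^n,t^n))$, we have $\Pi \hat{X}^n = \hat{X}^n$ and $\|(I-\Pi)F(\hat{X}^n,t^n)\| \le 2\epsilon_1$ (the factor $2$ from $\|(I-\Pi)\mathcal{T}_{\epsilon_1}(F)\| = 0$ plus $\|F - \mathcal{T}_{\epsilon_1}(F)\| \le \epsilon_1$, applied before and after projection). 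A Taylor expansion of the exact solution, $\mathcal{X} = \hat{X}^n + \Delta t\, F(\hat{X}^n, t^n) + O(\Delta t^2)$ using $\|F\|\le B$ and Lipschitz continuity to control $\frac{d}{dt}F$ along the trajectory, then shows that $\mathcal{X}$ itself satisfies a perturbed version of \eqref{eqn:galerkin} up to a residual of size $O(\Delta t^2) + 2\Delta t\,\epsilon_1$. The second piece is a standard \emph{stability} estimate for the Galerkin--implicit-Euler map: subtracting the equation satisfied by $Y^{n+1}$ from the perturbed equation satisfied by $\mathcal{X}$, testing against $Y^{n+1}-\Pi\mathcal{X}$, and using the Lipschitz bound on $F$ gives $\|Y^{n+1} - \Pi\mathcal{X}\| \le (1 - L\Delta t)^{-1}(\text{residual})$ for $\Delta t$ small, i.e. $\le C(\Delta t^2 + 2\Delta t\,\epsilon_1)$. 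Combining with $\|(I-\Pi)\mathcal{X}\| = \|(I-\Pi)(\mathcal{X}-\hat{X}^n)\| \le \|\mathcal{X}-\hat{X}^n\| - \text{(better)} = O(\Delta t)\cdot$something — here I'd use that $(I-\Pi)\hat{X}^n = 0$ and $\mathcal{X} - \hat{X}^n - \Delta t F(\hat{X}^n,t^n) = O(\Delta t^2)$, so $\|(I-\Pi)\mathcal{X}\| \le \Delta t \|(I-\Pi)F(\hat{X}^n,t^n)\| + O(\Delta t^2) \le 2\Delta t\,\epsilon_1 + O(\Delta t^2)$ — yields $\|\mathcal{X} - Y^{n+1}\| \le C\Delta t^2 + 2\Delta t\,\epsilon_1$.

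Assembling the three bounds via the triangle inequality, $\|\mathcal{X} - \hat{X}^{n+1}\| \le \|\mathcal{X} - Y^{n+1}\| + \|Y^{n+1} - \hat{X}^{n+1}\| \le C\Delta t^2 + 2\Delta t\,\epsilon_1 + \epsilon_2$, which is \eqref{eqn:lte2}. The main obstacle I anticipate is the Galerkin stability step: one must verify that testing the difference equation against an element of $W_{\tilde{U},\tilde{V}}$ is legitimate (it is, since $Y^{n+1}-\Pi\mathcal{X}\in W_{\tilde{U},\tilde{V}}$ and both the $\hat{X}^n$ term and the $F$ term, once hit by $\tilde{U}^T(\cdot)\tilde{V}$, live there), and that the implicit term $\Delta t\langle F(Y^{n+1}) - F(\Pi\mathcal{X}), Y^{n+1}-\Pi\mathcal{X}\rangle$ is absorbed using only the global Lipschitz constant $L$ rather than a one-sided (monotonicity) bound — this forces the smallness condition on $\Delta t$ relative to $L$ that is flagged in the statement. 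A secondary bookkeeping point is tracking where the projection $\Pi$ acts: since $F(\tilde U \tilde\Sigma\tilde V^T, t^{n+1})$ need not lie in $W_{\tilde U,\tilde V}$, the Galerkin equation only sees $\tilde U^T F(\cdot)\tilde V$, and one must be careful that the exact solution's residual is measured consistently after projection; this is routine but is the place where an extra constant could sneak in.
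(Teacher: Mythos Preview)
Your proposal is correct and follows essentially the same route as the paper: both arguments hinge on the projection identities $\Pi\hat{X}^n=\hat{X}^n$ and $\|(I-\Pi)F(\hat{X}^n,t^n)\|\le 2\epsilon_1$, a Lipschitz absorption of the implicit term under the smallness condition $\Delta t\lesssim 1/L$, and the final $\epsilon_2$ truncation bound. The only cosmetic difference is that you organize the estimate as a consistency-plus-stability split through the intermediate $\Pi\mathcal{X}$, whereas the paper rewrites $\hat{X}^{n+1,pre}=\hat{X}^n+\Delta t\,F(\hat{X}^n,t^n)+\Delta t\,\Pi[F(\hat{X}^{n+1,pre},t^{n+1})-F(\hat{X}^n,t^n)]$ directly and bounds $\|\hat{X}^{n+1,pre}-\mathcal{X}\|$ via a single self-referential inequality.
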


\begin{proof}
First we prove \eqref{eqn:lte2} for $\epsilon_1 = 0$. Then $\mathcal{T}^{sum}_{\epsilon_1=0}(F(\hat{X}^n,t^n)) = F(\hat{X}^n,t^n)$. From \eqref{eqn:gebef},  and left multiplying by $\tilde{U}$ and right multiplying by $\tilde{V}^T,$ we obtain
$$\hat{X}^{n+1,pre}= \tilde{U}\tilde{\Sigma}\tilde{V}^T=\tilde{U}\tilde{U}^T(\hat{X}^{n}+\Delta t F(\hat{X}^{n+1,pre},t^{n+1})))\tilde{V}\tilde{V}^T.
$$
Since the column space of $\hat{X}^n$ is a subset of the column space of $\tilde{U},$ i.e. $R(\hat{X}^n)\subseteq R(\tilde{U}),$ we have $\tilde{U}\tilde{U}^T\hat{X}^{n}=\hat{X}^{n}.$ Similarly, because $R((\hat{X}^n)^T) \subseteq R(\tilde{V}),$ $\hat{X}^{n}\tilde{V}\tilde{V^T}=\hat{X}^{n}.$ Therefore, $\tilde{U}\tilde{U}^T\hat{X}^{n}\tilde{V}\tilde{V^T}=\hat{X}^{n}.$ By a similar argument, $R(F(\hat{X}^n,t^n))\subseteq R(\tilde{U})$ and $R(F(\hat{X}^n,t^n)^T) \subseteq R(\tilde{V})$ implies $\tilde{U}\tilde{U}^TF(\hat{X}^{n},t^{n})\tilde{V}\tilde{V}^T=F(\hat{X}^{n},t^{n}).$ This gives
\begin{eqnarray}
\label{eqn:cc}
&&\hat{X}^{n+1,pre}=\hat{X}^{n} +\Delta t \tilde{U}\tilde{U}^TF(\hat{X}^{n+1,pre},t^{n+1})\tilde{V}\tilde{V}^T\notag \\
&&=\hat{X}^{n} +\Delta t \tilde{U}\tilde{U}^TF(\hat{X}^{n},t^{n})\tilde{V}\tilde{V}^T+\Delta t \tilde{U}\tilde{U}^T(F(\hat{X}^{n+1,pre},t^{n+1})-F(\hat{X}^{n},t^{n}))\tilde{V}\tilde{V}^T \notag \\
&&=\hat{X}^{n} +\Delta t  F(\hat{X}^{n},t^{n})+\Delta t \tilde{U}\tilde{U}^T(F(\hat{X}^{n+1,pre},t^{n+1})-F(\hat{X}^{n},t^{n}))\tilde{V}\tilde{V}^T. 
\end{eqnarray}
Therefore,
\begin{multline}
\label{eqn:pf1}
\|\hat{X}^{n+1,pre}-\mathcal{X}\| \\
=\|\hat{X}^{n} -\mathcal{X} +\Delta t  F(\hat{X}^{n},t^{n})+\Delta t \tilde{U}\tilde{U}^T(F(\hat{X}^{n+1,pre},t^{n+1})-F(\hat{X}^{n},t^{n}))\tilde{V}\tilde{V}^T\|.  
\end{multline}
We have $ \hat{X}^{n} -\mathcal{X} =-\Delta t F(X(t^*),t^*),$ where $t^*$ is a point on the interval $[t^n, t^{n+1}].$ Therefore,
\begin{eqnarray}
\label{eqn:pf2}
&&\|\hat{X}^{n} -\mathcal{X} +\Delta t  F(\hat{X}^{n},t^{n})\|=\Delta t \|- F(X(t^*),t^*) +  F(\hat{X}^{n},t^{n})\| \notag \\
&&\le \Delta t L(\Delta t+ \|X(t^*)-\hat{X}^{n}\|)  \le \Delta t^2 L(B+1). 
\end{eqnarray}
On the other hand,
\begin{eqnarray*}
&&\|\tilde{U}\tilde{U}^T(F(\hat{X}^{n+1,pre},t^{n+1})-F(\hat{X}^{n},t^{n}))\tilde{V}\tilde{V}^T \|\\
&& \le  \|\tilde{U}\tilde{U}^T\|_2 \|F(\hat{X}^{n+1,pre},t^{n+1})-F(\hat{X}^{n},t^{n})\| \|\tilde{V}\tilde{V}^T\|_2,
\end{eqnarray*}
where $\|\cdot\|_2$ is the matrix 2-norm. Since $ \|\tilde{U}\tilde{U}^T\|_2= \|\tilde{V}\tilde{V}^T\|_2=1$ and
$ 
\|F(\hat{X}^{n+1,pre},t^{n+1})-F(\hat{X}^{n},t^{n})\| \le L(\Delta t + \|\hat{X}^{n}-\mathcal{X}\| +\|\hat{X}^{n+1,pre}-\mathcal{X}\|) \le L(\Delta t + B \Delta t +\|\hat{X}^{n+1,pre}-\mathcal{X}\| ), 
$ we get
\begin{eqnarray}
\label{eqn:pf3}
 \|\tilde{U}\tilde{U}^T(F(\hat{X}^{n+1,pre},t^{n+1})-F(\hat{X}^{n},t^{n}))\tilde{V}\tilde{V}^T \|
 \le  L(\Delta t + B \Delta t+\|\hat{X}^{n+1,pre}-\mathcal{X}\| ).
\end{eqnarray}
Combining \eqref{eqn:pf1},\eqref{eqn:pf2},\eqref{eqn:pf3}, we get
$$
\|\hat{X}^{n+1,pre}-\mathcal{X}\| \le \Delta t^2 L(2B+2) + L\Delta t \|\hat{X}^{n+1,pre}-\mathcal{X}\|,
$$
Choosing $\Delta t$ small enough (i.e. $\Delta t \le \frac{1}{2L}$), we proved
\begin{eqnarray}
\label{eqn:pf4}
\|\hat{X}^{n+1,pre}-\mathcal{X}\| \le 4\Delta t^2 L(B+1).
\end{eqnarray}
Therefore, \eqref{eqn:lte2} follows by using $\|\hat{X}^{n+1,pre}-\hat{X}^{n+1}\|=\|\hat{X}^{n+1,pre}-\mathcal{T}_{\epsilon_2}^{svd}  (\hat{X}^{n+1,pre})\| \le \epsilon_2.$

The proof for \eqref{eqn:lte2} with $\epsilon_1 \neq 0$ largely follows the same process. The only difference is that we no longer have $\tilde{U}\tilde{U}^TF(\hat{X}^{n},t^{n})\tilde{V}\tilde{V}^T=F(\hat{X}^{n},t^{n})$ due to the truncation in the prediction step. Instead, we have $\tilde{U}\tilde{U}^T\mathcal{T}_{\epsilon_1} (F(\hat{X}^{n},t^{n}))\tilde{V}\tilde{V}^T=\mathcal{T}_{\epsilon_1} (F(\hat{X}^{n},t^{n})).$ In this case, 
$$\tilde{U}\tilde{U}^TF(\hat{X}^{n},t^{n})\tilde{V}\tilde{V}^T=\mathcal{T}_{\epsilon_1} (F(\hat{X}^{n},t^{n}))+\tilde{U}\tilde{U}^T(F(\hat{X}^{n},t^{n})-\mathcal{T}_{\epsilon_1} (F(\hat{X}^{n},t^{n})))\tilde{V}\tilde{V}^T,$$
and because $ \|\tilde{U}\tilde{U}^T\|_2= \|\tilde{V}\tilde{V}^T\|_2=1,$ this leads to 
$$
\|\tilde{U}\tilde{U}^TF(\hat{X}^{n},t^{n})\tilde{V}\tilde{V}^T-F(\hat{X}^{n},t^{n})\| \le 2 \|F(\hat{X}^{n},t^{n})-\mathcal{T}_{\epsilon_1} (F(\hat{X}^{n},t^{n}))\|=2 \epsilon_1.
$$
By a similar argument,  \eqref{eqn:lte2} follows. The details are omitted for brevity.
\end{proof}

The numerical convergence  follows from Lemma \ref{lemma:lte1}  by a standard result for the convergence of one-step method for ODE and is skipped for brevity. Now we would like to comment on the results in this Lemma to understand property of the scheme. As we can see from the results in Section \ref{sec:stability}, the schemes are unconditionally stable due to the Galerkin step. Also, in Lemma \ref{lemma:lte1}, the tangent projection error is absent because the DLRA approximation is not used. However, the time step is limited by the Lipschitz constant and also the constant in the estimate $C$ depends on the Lipschitz constant. For  standard error estimates for implicit schemes for stiff problems, the concept of B-convergence \cite{frank1981concept} can remove such restrictions with the help of one-sided Lipschitz continuity. Unfortunately, such results are not available in our case.

\section{Numerical results}
\label{sec:numerical}
We now present numerical results that illustrate the features of the methods described above. In this section we will refer to the different methods in figures and tables according to the following naming convention. We will denote the Merge method by M, and the Merge-adapt method by MA. For many problems we will also compare to the classic implicit Euler discretization as IE or Implicit Euler. When computing errors we will compare to a solution computed using Matlab's built in {\tt ODE15s} or {\tt ODE45} solvers with relative and absolute tolerance set to $10^{-12}$. We always report relative errors in the  Frobenius norm. We note that in all the examples below we use $\epsilon_1=0$ but that the results are very similar for $\epsilon_1 = \Delta t$.  { Additionally, we adjust the truncation $\epsilon_2$ to the PDE setting and chose $\epsilon_2 =  \mathcal{O} ((\Delta t^2+h_1^3+h_2^3)/\sqrt{h_1 h_2}). $ Here the first term is from the time stepping local truncation error, the second term is from the spatial discretization error and the denominator is rescaling for the norms to be independent of grid spacing.} {In some of the results we also include the ``hyperbolic and parabolic CFL numbers'', ($\Delta t/h$  and $\Delta t/h^2$) to illustrate that the method can take large timesteps when parabolic terms are present while still achieving temporal accuracy.   } 

The main PDE examples used in this section are simulations of solutions to the equation 
\begin{multline} \label{eq:PDE_num_example}
\frac{\partial \rho}{\partial t} + r_1(x_1) \frac{\partial \rho}{\partial x_2} + r_2(x_2) \frac{\partial \rho}{\partial x_1} = \\
 b_1(x_2) \frac{\partial}{\partial x_1} \left[ a_{1}(x_1) \frac{\partial \rho}{\partial x_1} \right]  
 + b_2(x_2) \frac{\partial^2 \left[ a_{2}(x_1) \rho \right]}{\partial x_1 \partial x_2}  
+ a_3(x_1) \frac{\partial^2 \left[ b_{3}(x_2) \rho \right]}{\partial x_1 \partial x_2}  
  + a_4(x_1) \frac{\partial}{\partial x_2} \left[ b_{4}(x_2) \frac{\partial \rho}{\partial x_2} \right].
\end{multline}
Here $\rho = \rho(t,x_1,x_2)$ and we always use homogenous Dirichlet boundary conditions. Equation (\ref{eq:PDE_num_example}) is discretized on the domain $(x_1,x_2) \in [-1,1]^2,$ using second order accurate finite differences. Consider the grid $(x_{1,i}, x_{2,j}) = (-1 + i h_1, -1 + j h_2)$, with $h_1 = 2/(m_1+1),$ $h_2 = 2/(m_2+1).$ The solution $\rho(t_n,x_{1,i},x_{2,j})$ is then approximated by the grid function (matrix) $\hat{X}^{n}_{i,j}$ and derivatives are approximated as follow:
\[
 \frac{\partial}{\partial x_1} \left[ a_{1}(x_1) \frac{\partial \rho(t_n,x_{1},x_{2})}{\partial x_1} \right] \Bigg|_{x_{1,i},x_{2,j}}   \approx D_+^1 \left[ \frac{a_1(x_{1,i})+a_1(x_{1,i-1})}{2} \right]D_-^1\, X_{i,j}^n, 
\] 
\[
 \frac{\partial^2 \left[ a_{2}(x_1) \rho \right]}{\partial x_1 \partial x_2}  \Bigg|_{x_{1,i},x_{2,j}}  =  \frac{\partial}{\partial x_1} \left[ a_{2}(x_1) \frac{\partial \rho(t_n,x_{1},x_{2})}{\partial x_2} \right] \Bigg|_{x_{1,i},x_{2,j}}   \approx D_0^1  a_2(x_{1,i}) D_0^2\, X_{i,j}^n. 
\] 
with the remaining two terms discretized in the same way. Here the difference operators  in the 1-direction are defined 
\[
2h_1 D_0^1 w_{i,j} \equiv w_{i+1,j} - w_{i-1,j}, \ \ h_1 D_+^1 w_{i,j} \equiv w_{i+1,j} - w_{i,j},  \ \ h_1 D_-^1 w_{i,j} \equiv w_{i,j} - w_{i-1,j}, 
\]
with the operators in the 2-direction defined analogously. 

{The equation (\ref{eq:PDE_num_example}) thus take the semidiscrete form
\begin{multline}
\frac{d}{dt} X + R_1 X (D_0^{(2)})^T + D_0^{(1)} X (R_2)^T = \\ 
L_{a_1} X B_1^T + D_0^{(1)} A_2 X (D_0^{(2)})^T B_2^T   + A_3 D_0^{(1)} X B_3^T (D_0^{(2)})^T + A_4 X L_{b_4}^T.
\end{multline}
Here $R_1, R_2, B_1,  A_2, B_2,  A_3, B_4, A_4$ are diagonal matrices with $r_1(x_1), r_2(x_2), b_1(x_2), $ etc. on the diagonal. Their sizes are $n_1 \times n_1$ or $n_2 \times n_2$ depending on if they appear on the left or right of $X$. The matrices $D_{0}^{(k)}, \, k = 1,2,$ are matrices with super-diagonal $0.5/h_k$ and sub-diagonal $-0.5/h_k$. The matrices $L_{a_1}$ and $L_{b_4}$ have elements $(L_{a_1})_{i,i-1} = (a_1(x_{1,i})+a_1(x_{1,i-1}))/(2h_1^2)$, $(L_{a_1})_{i,i-1} = (a_1(x_{1,i+1})+2 a_1(x_{1,i})+a_1(x_{1,i-1}))/(4h_1^2)$, and $(L_{a_1})_{i,i+1} = (a_1(x_{1,i+1})+a_1(x_{1,i}))/(2h_1^2),$ (with  $L_{b_4}$) defined similarly.
}

\subsection{Solid body rotation} \label{rot_example}
In this experiment we consider a hyperbolic problem with pure solid body rotation, corresponding to, $r_1(x_1) = x_1 $ $r_2(x_2) = -x_2$ and $a_j = b_j = 0,\, j = 1,\ldots,4$. We start from the rank-1 initial data 
\[
\rho(0,x_1,x_2) = e^{-\left(\frac{x_1}{0.3}\right)^2} e^{-\left(\frac{x_2}{0.1}\right)^2},
\] 
and evolve the solution until time $t = \pi$ using a base timestep of $\Delta t = \pi / n_T$, with $n_T=40, 80, 160, 320$. We carry out the computation on three different grids with $m_1=m_2 = 99, 199, 799$.  The errors are listed in Table \ref{tab:rot_errors}. Again similar conclusions can be drawn, namely all three methods give comparable errors. The rank evolution is reported in Figure \ref{fig:pure_rot_rank}. In this problem, the rank evolution exhibits periodic pattern in time due to the solution structure. The rank of the low rank methods are lower than the implicit Euler method.

In Figure \ref{fig:rot_contour} we display the solution computed using a grid with $m_1=m_2 = 99$. Results using the Merge method (left), the very accurate reference solution (middle) and the BUG method (right) are presented. The solutions are displayed at time $0.5\pi$ when they have rotated from being horizontal to vertical. The BUG method does not see the rotation (it is outside the tangent space) and remains stationary. 
\begin{figure}[htb]
\begin{center}	
\includegraphics[width=0.32\textwidth,trim={0.0cm 0.0cm 0.0cm 0.0cm},clip]{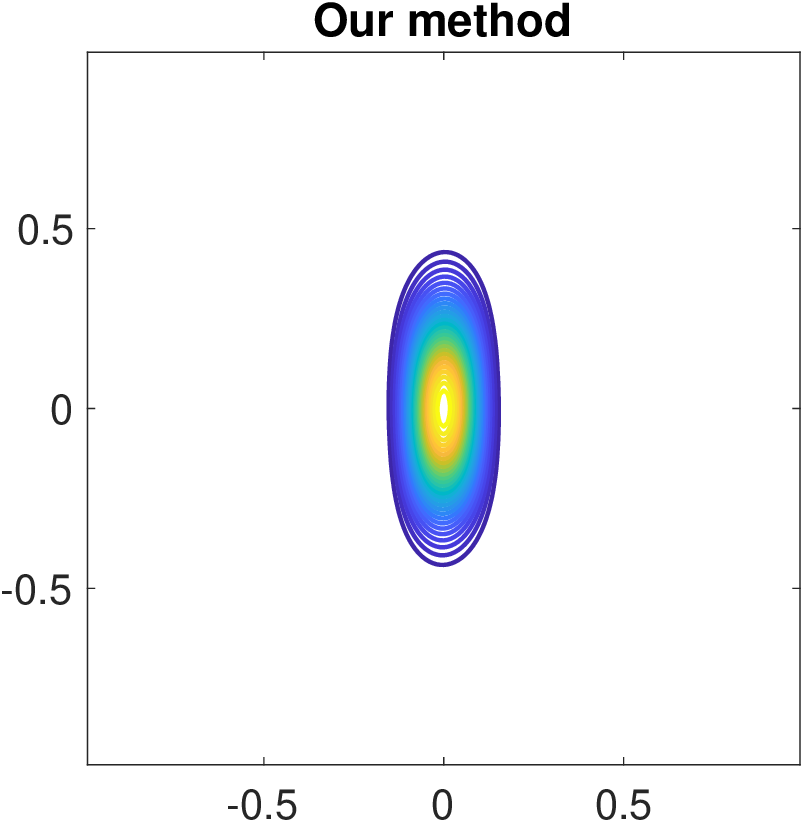}
\includegraphics[width=0.32\textwidth,trim={0.0cm 0.0cm 0.0cm 0.0cm},clip]{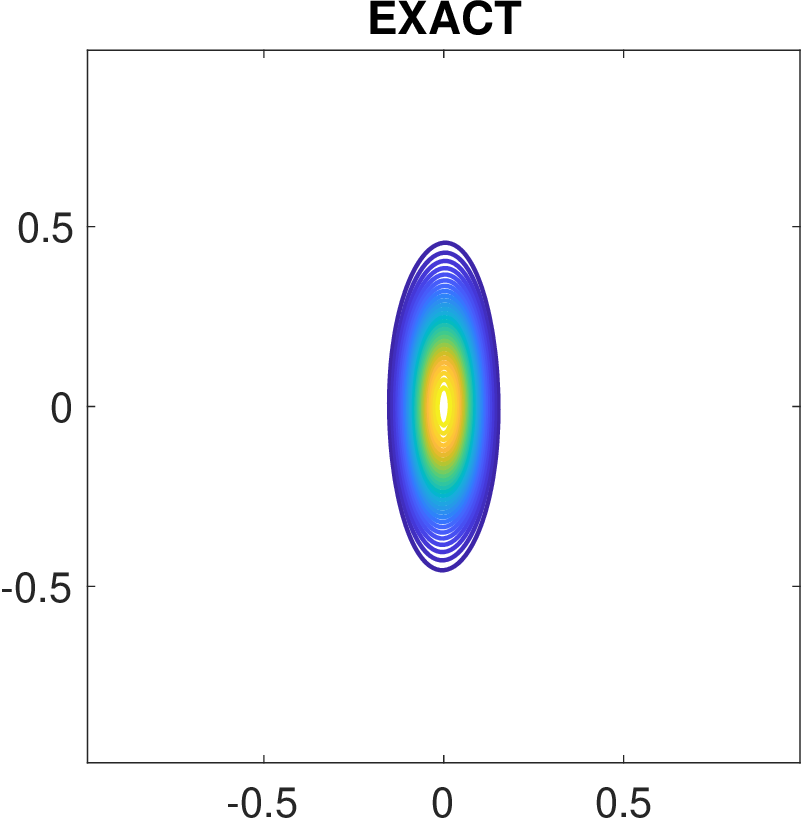}
\includegraphics[width=0.32\textwidth,trim={0.0cm 0.0cm 0.0cm 0.0cm},clip]{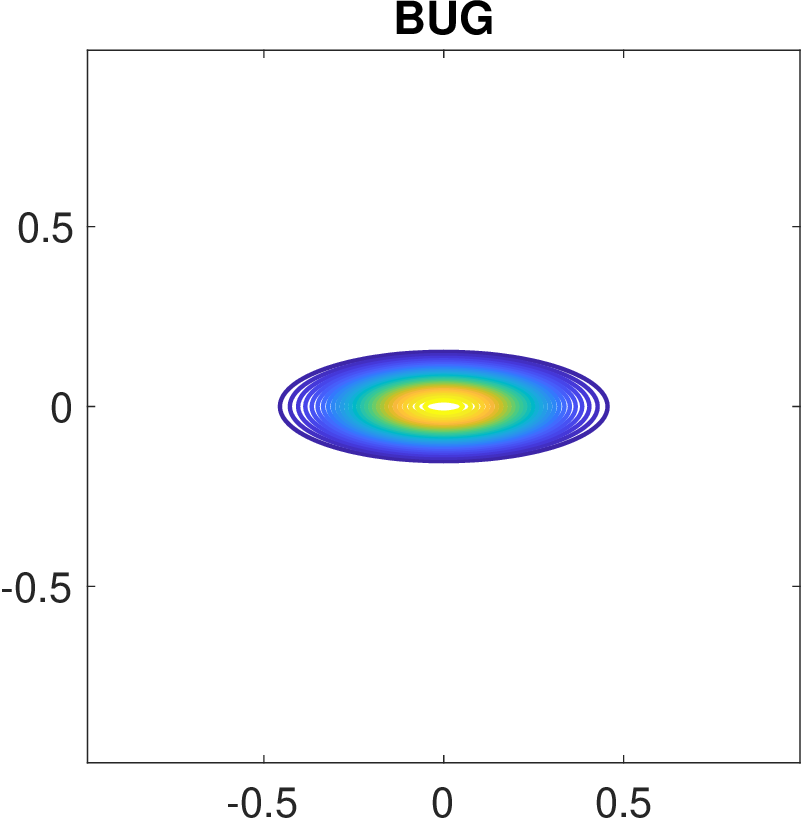}
\caption{The solution to the solid body rotation problem computed using the Merge method (left), the a highly accurate reference solution (middle) and the BUG method (right). The BUG method does not see the rotation (it is outside the tangent space) and remains stationary. \label{fig:rot_contour}}
\end{center}
\end{figure}

 \begin{table}[h!] 
 \begin{center} 
  \begin{tabular}{| p{0.6cm} | p{2.5cm} | p{4.0cm} | p{2.8cm}  |  p{2.2cm}  |}
 \hline
    $n_T $ & M & MA & IE & $\Delta t/h$  \\
\hline 
\hline 
40 &  2.50(-1) &  2.50(-1) F = 40&  2.51(-1) &    3.9 :      0 \\
\hline 
80 &  1.71(-1)  [0.54] &  1.71(-1)  [0.54] F = 80&  1.73(-1)  [0.53] &      2 :      0 \\
\hline 
160 &  1.15(-1)  [0.57] &  1.15(-1)  [0.57] F = 155&  1.10(-1)  [0.64] &   0.98 :      0 \\
\hline 
320 &  7.12(-2)  [0.69] &  7.11(-2)  [0.69] F = 307&  6.60(-2)  [0.74] &   0.49 :      0 \\
\hline 
\hline 
40 &  2.50(-1) &  2.50(-1) F = 32&  2.54(-1) &    7.9 :      0 \\
\hline 
80 &  1.75(-1)  [0.51] &  1.75(-1)  [0.51] F = 78&  1.76(-1)  [0.53] &    3.9 :      0 \\
\hline 
160 &  1.14(-1)  [0.62] &  1.14(-1)  [0.62] F = 152&  1.13(-1)  [0.63] &      2 :      0 \\
\hline 
320 &  6.92(-2)  [0.72] &  6.92(-2)  [0.72] F = 299&  6.81(-2)  [0.73] &   0.98 :      0 \\
\hline 
\hline 
40 &  2.51(-1) &  2.51(-1) F = 39&  2.55(-1) &     31 :      0 \\
\hline 
80 &  1.75(-1)  [0.52] &  1.75(-1)  [0.52] F = 74&  1.77(-1)  [0.52] &     16 :      0 \\
\hline 
160 &  1.13(-1)  [0.62] &  1.13(-1)  [0.62] F = 139&  1.14(-1)  [0.63] &    7.9 :      0 \\
\hline 
320 &  6.86(-2)  [0.72] &  6.86(-2)  [0.72] F = 298&  6.88(-2)  [0.73] &    3.9 :      0 \\
\hline 
 \end{tabular}
 \caption{Solid body rotation. Displayed are the errors for different timesteps along with estimated rates of convergence (in brackets). The top box is for $m_1=m_2=99$, the middle for $m_1=m_2=199$ and the bottom for $m_1=m_2=799$. The F indicates how many times the Merge-adapt needed to add the BUG spaces. \label{tab:rot_errors}}
 \end{center} 
 \end{table} 
 
 \begin{figure}[htb!]
\begin{center}	
\includegraphics[width=0.49\textwidth,trim={0.0cm 0.0cm 0.0cm 0.0cm},clip]{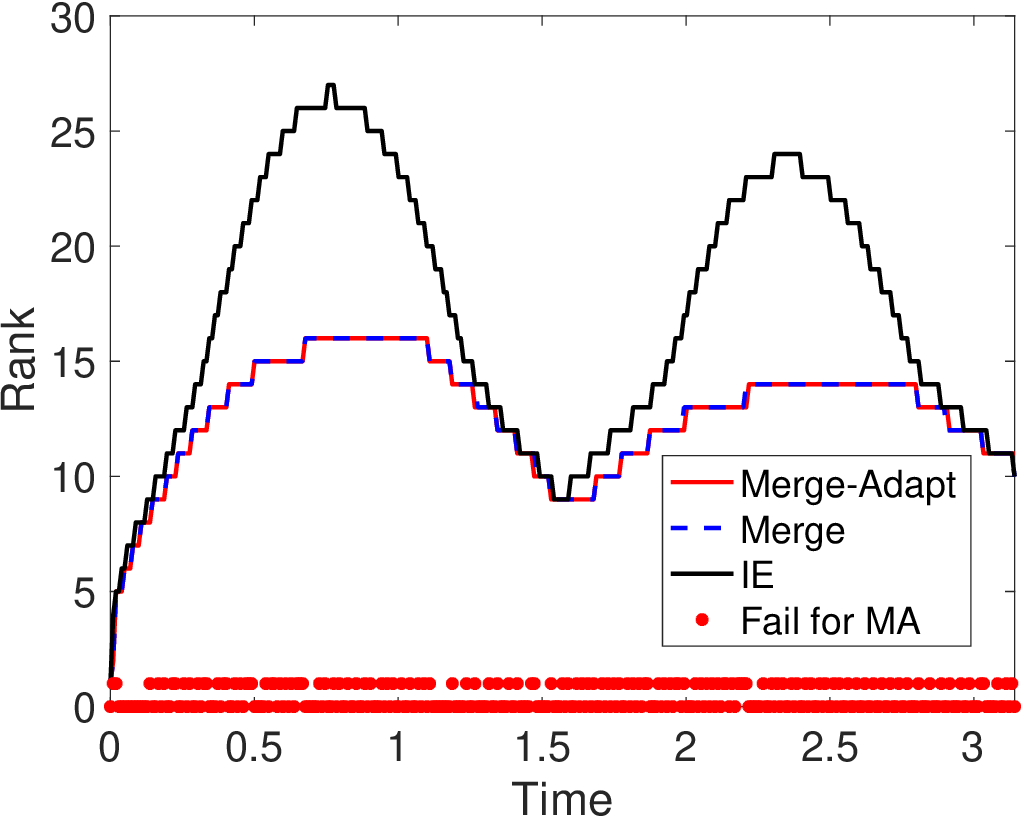}
\caption{Displayed is the {truncated} rank for the Merge, Merge-adapt and Implicit Euler method for the problem with solid body rotation problem. Here ``Fail for MA'' is the indicator where the BUG space is needed for the Merge-adapt method. For the Implicit Euler method we constantly use the threshold $\Delta t^2$ when computing the rank via the truncated SVD. All the computations are done with $m_1 = m_2 = 799$ and $n_{T} = 320.$ \label{fig:pure_rot_rank}}
\end{center}
\end{figure}

\subsection{Rotation with anisotropic diffusion} \label{ani_rot_example}
In this experiment we consider solid body rotation, $r_1(x_1) = x_1 $ $r_2(x_2) = -x_2$, with anisotropic diffusion. The diffusion coefficients are:
\begin{eqnarray*}
&&a_1(x_1) = a_4(x_1) =  \sqrt{\mu}(1 + 0.1 \sin(\pi x_1)), \\ 
&&a_2(x_1) =\sqrt{\mu}(0.15 + 0.1 \sin(\pi x_1)), \\
&&a_3(x_1) =\sqrt{\mu}(0.15 + 0.1 \cos(\pi x_1)), \\ 
&&b_1(x_2) = b_4(x_2) = \sqrt{\mu}(1 + 0.1 \cos(\pi x_2)), \\ 
&&b_2(x_2) = \sqrt{\mu}(0.15 + 0.1 \cos(\pi x_2)), \\
&&b_3(x_2) = \sqrt{\mu}(0.15 + 0.1 \sin(\pi x_2)),  
\end{eqnarray*}
and   $\mu = 10^{-3}$. We start from the rank-1 initial data 
\[
\rho(0,x_1,x_2) = e^{-\left(\frac{x_1}{0.3}\right)^2} e^{-\left(\frac{x_2}{0.1}\right)^2},
\] 
and evolve the solution until time $t = \pi$ using a timestep  $\Delta t = \pi / n_T$, with $40, 80, 160, 320$. We carry out the computation on three different grids with $m_1=m_2 = 99, 199, 799$.  

\begin{table}[h!] 
 \begin{center} 
  \begin{tabular}{| p{0.6cm} | p{2.9cm} | p{4.2cm} | p{2.6cm}  |  p{3.0cm}  |}
 \hline
    $n_T $ & M & MA & IE & $\Delta t/h$ : $\mu \Delta t/h^2$  \\
\hline 
\hline 
40 &  1.65(-1) &  1.65(-1) F = 39&  1.60(-1) &    3.9 :    0.2 \\
\hline 
80 &  1.15(-1)  [0.51] &  1.15(-1)  [0.51] F = 78&  1.01(-1)  [0.65] &      2 :  0.098 \\
\hline 
160 &  6.88(-2)  [0.74] &  6.88(-2)  [0.74] F = 155&  6.01(-2)  [0.75] &   0.98 :  0.049 \\
\hline 
320 &  4.38(-2)  [0.65] &  4.38(-2)  [0.65] F = 292&  3.36(-2)  [0.83] &   0.49 :  0.025 \\
\hline 
\hline 
40 &  1.62(-1) &  1.62(-1) F = 39&  1.60(-1) &    7.9 :   0.79 \\
\hline 
80 &  1.03(-1)  [0.64] &  1.03(-1)  [0.64] F = 74&  1.02(-1)  [0.65] &    3.9 :   0.39 \\
\hline 
160 &  6.26(-2)  [0.72] &  6.26(-2)  [0.72] F = 133&  6.07(-2)  [0.75] &      2 :    0.2 \\
\hline 
320 &  3.59(-2)  [0.80] &  3.59(-2)  [0.80] F = 288&  3.40(-2)  [0.83] &   0.98 :  0.098 \\
\hline 
\hline 
40 &  1.61(-1) &  1.61(-1) F = 38&  1.61(-1) &     31 :     13 \\
\hline 
80 &  1.02(-1)  [0.65] &  1.02(-1)  [0.65] F = 77&  1.02(-1)  [0.65] &     16 :    6.3 \\
\hline 
160 &  6.10(-2)  [0.75] &  6.10(-2)  [0.75] F = 138&  6.09(-2)  [0.75] &    7.9 :    3.1 \\
\hline 
320 &  3.43(-2)  [0.82] &  3.43(-2)  [0.82] F = 273&  3.41(-2)  [0.83] &    3.9 :    1.6 \\
\hline 
 \end{tabular}
 \caption{Solid body rotation with anisotropic diffusion. Displayed are the errors (here 1.3(-1) means $1.3 \cdot 10^{-1}$) for different timesteps along with estimated rates of convergence (in brackets). The numbers on the far right are the ``hyperbolic and parabolic CFL numbers''. The top box is for $m_1=m_2=99$, the middle for $m_1=m_2=199$ and the bottom for $m_1=m_2=799$. The F indicates how many times the Merge-adapt needed to add the BUG spaces. \label{tab:ani_rot_errors}} 
 \end{center} 
 \end{table}

In Table \ref{tab:ani_rot_errors} we display the errors and rates of convergence for the Merge, Merge-adapt and the implicit Euler method. All three methods behave almost identically (the errors and rates of convergence only differ in the third or fourth digit). However, the time it takes to compute the solution is quite different. 

We also track the rank of the solution as a function of time.  In Figure \ref{fig:ani_rot_rank} we display the rank for the different methods as a function of time. Once the solution becomes more diagonal at time $\pi/4$ the rank is decreased and then increased again as the rotation continues. For the Merge and Merge-adapt method we use the truncation tolerance $\epsilon_2 = \Delta t^2$ and in order to compare with the classic implicit Euler we compute the SVD and count the rank of that solution using the same threshold. We observe the ranks from the low rank methods are slightly smaller than the implicit Euler method.

\begin{figure}[htb!]
\begin{center}	
\includegraphics[width=0.49\textwidth,trim={0.0cm 0.0cm 0.0cm 0.0cm},clip]{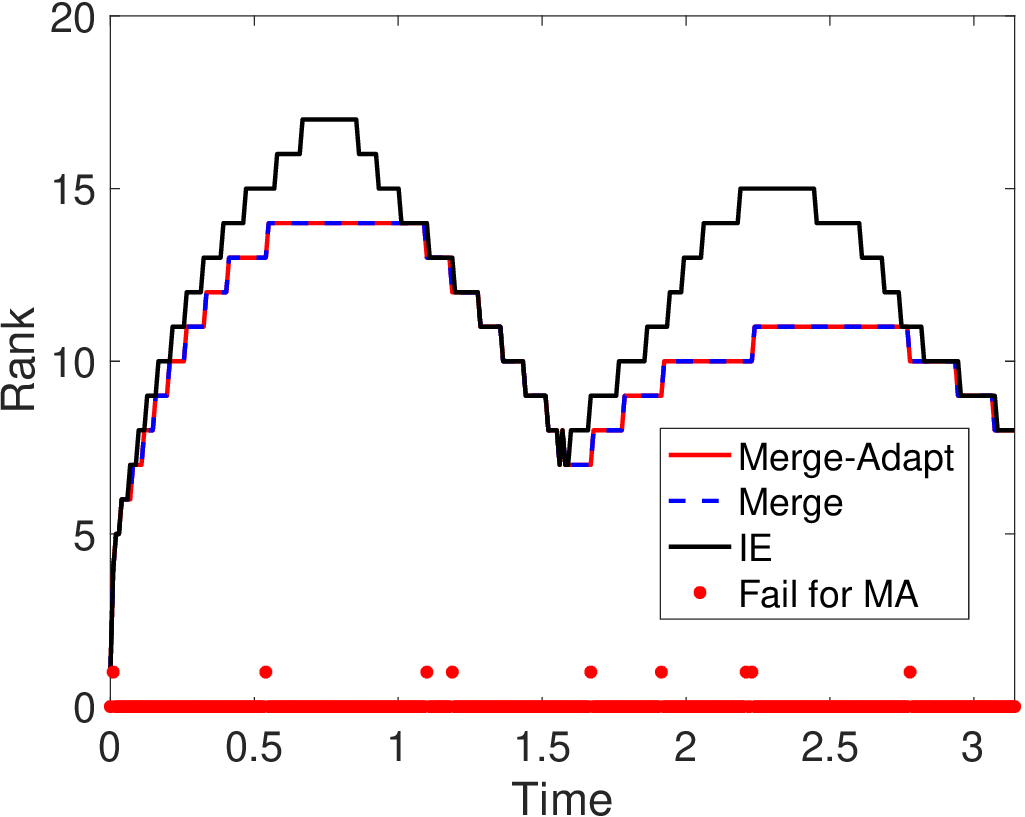}
\caption{Displayed is the {truncated} rank for the Merge, Merge-adapt and Implicit Euler method for the problem with rotation and anisotropic diffusion. Here ``Fail for MA'' is the indicator where the BUG space is needed for the Merge-adapt method. For the Implicit Euler method we constantly use the threshold $\Delta t^2$ when computing the rank via the truncated SVD. All the computations are done with $m_1 = m_2 = 799$ and $n_{T} = 320.$ \label{fig:ani_rot_rank}}
\end{center}
\end{figure}

\subsection{Anisotropic diffusion} \label{ani_heat_example}
In this experiment we consider anisotropic diffusion without rotation ($r_1 = r_2 = 0$). We take the diffusion coefficients to be constants with $a_1 = a_4 = b_1 = b_4 = 1$ and $a_2 = a_3 = b_2 = b_3 = 0.3$.  Starting from the rank-1 initial data 
\[
\rho(0,x_1,x_2) = \sin( \pi x_1) \sin(\pi x_2),
\] 
we evolve the solution until time $t = 0.5$ using a base timestep of $\Delta t = 0.5 / n_T$, with $40, 80, 160, 320, 640, 1280$. We carry out the computation on three different grids with $m_1=m_2 = 99, 399, 799$.  

 \begin{table}[h!] 
 \begin{center} 
  \begin{tabular}{| p{0.8cm} | p{2.5cm} | p{4.2cm} | p{2.8cm}  |  p{2.5cm}  |}
 \hline
    $n_T $ & M & MA & IE & $\Delta t/h$ : $\Delta t/h^2$  \\
\hline 
\hline 
40 &  9.33(-2) &  9.34(-2) F = 34&  9.31(-2) &   0.62 :     31 \\
\hline 
80 &  3.05(-2)  [1.61] &  3.20(-2)  [1.54] F = 58&  4.39(-2)  [1.08] &   0.31 :     16 \\
\hline 
160 &  1.06(-2)  [1.52] &  1.47(-2)  [1.11] F = 99&  2.13(-2)  [1.03] &   0.16 :    7.8 \\
\hline 
320 &  4.85(-3)  [1.12] &  1.04(-2)  [0.50] F = 172&  1.05(-2)  [1.01] &  0.078 :    3.9 \\
\hline 
640 &  3.24(-3)  [0.58] &  5.42(-3)  [0.94] F = 325&  5.22(-3)  [1.01] &  0.039 :      2 \\
\hline 
1280 &  2.05(-3)  [0.66] &  2.35(-3)  [1.20] F = 513&  2.58(-3)  [1.01] &   0.02 :   0.98 \\
\hline 
\hline 
40 &  1.17(-1) &  1.17(-1) F = 39&  9.30(-2) &    1.2 :  1.2e+02 \\
\hline 
80 &  4.50(-2)  [1.37] &  4.50(-2)  [1.37] F = 71&  4.38(-2)  [1.08] &   0.62 :     62 \\
\hline 
160 &  1.61(-2)  [1.48] &  1.66(-2)  [1.44] F = 123&  2.13(-2)  [1.03] &   0.31 :     31 \\
\hline 
320 &  4.97(-3)  [1.69] &  6.72(-3)  [1.30] F = 221&  1.05(-2)  [1.01] &   0.16 :     16 \\
\hline 
640 &  3.01(-3)  [0.72] &  5.79(-3)  [0.21] F = 409&  5.22(-3)  [1.01] &  0.078 :    7.8 \\
\hline 
1280 &  2.29(-3)  [0.39] &  3.08(-3)  [0.91] F = 654&  2.58(-3)  [1.01] &  0.039 :    3.9 \\
\hline 
\hline 
40 &  1.58(-1) &  1.58(-1) F = 40&  9.30(-2) &      5 :  2000 \\
\hline 
80 &  6.87(-2)  [1.20] &  6.87(-2)  [1.20] F = 80&  4.38(-2)  [1.08] &    2.5 :  1000 \\
\hline 
160 &  2.75(-2)  [1.32] &  2.75(-2)  [1.32] F = 160&  2.13(-2)  [1.03] &    1.2 :  500 \\
\hline 
320 &  1.01(-2)  [1.44] &  1.01(-2)  [1.44] F = 318&  1.05(-2)  [1.02] &   0.62 :  250 \\
\hline 
640 &  3.15(-3)  [1.68] &  3.15(-3)  [1.68] F = 567&  5.21(-3)  [1.01] &   0.31 :  125 \\
\hline 
1280 &  1.01(-3)  [1.64] &  1.14(-3)  [1.46] F = 941&  2.57(-3)  [1.01] &   0.16 :     62 \\
\hline 
\hline 
 \end{tabular}
 \caption{Anisotropic diffusion. Displayed are the errors for different timesteps along with estimated rates of convergence (in brackets). The numbers on the far right are the ``hyperbolic and parabolic CFL numbers''. The top box is for $m_1=m_2=99$, the middle for $m_1=m_2=199$ and the bottom for $m_1=m_2=799$. The F indicates how many times the Merge-adapt needed to add the BUG spaces. \label{tab:ani_errors}} 
 \end{center} 
 \end{table} 

\begin{figure}[htb!]
\begin{center}	
\includegraphics[width=0.49\textwidth,trim={0.0cm 0.0cm 0.0cm 0.0cm},clip]{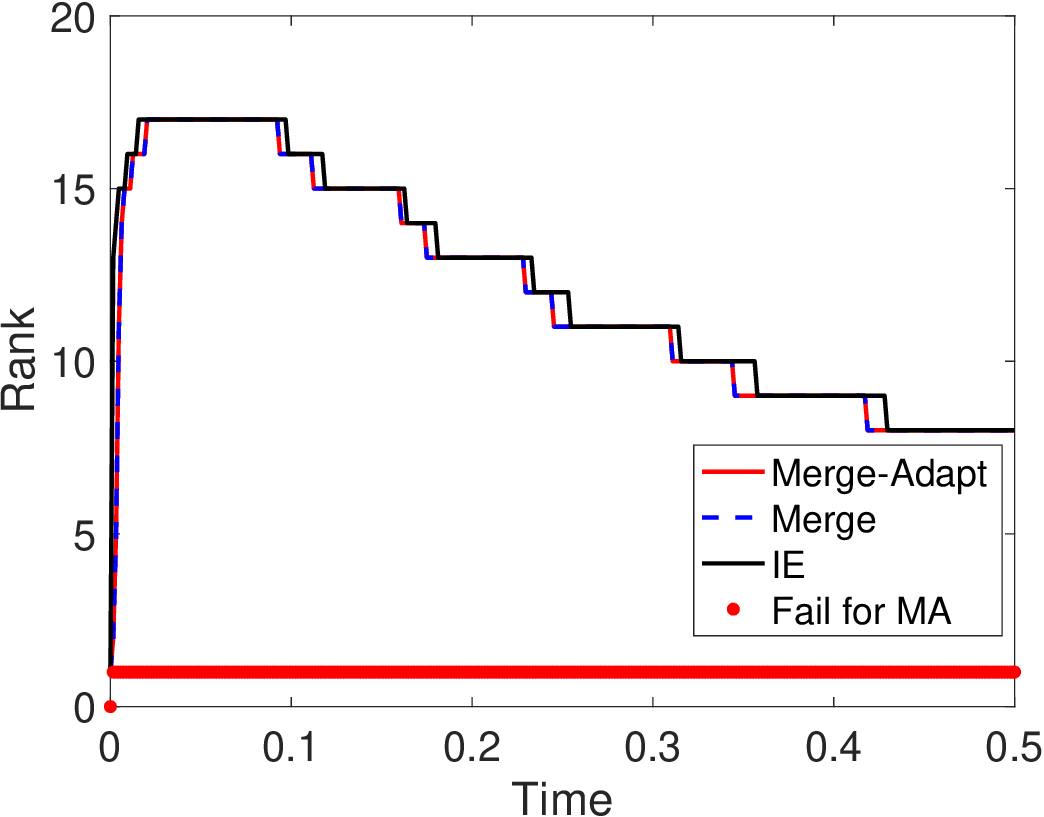}
\includegraphics[width=0.49\textwidth,trim={0.0cm 0.0cm 0.0cm 0.0cm},clip]{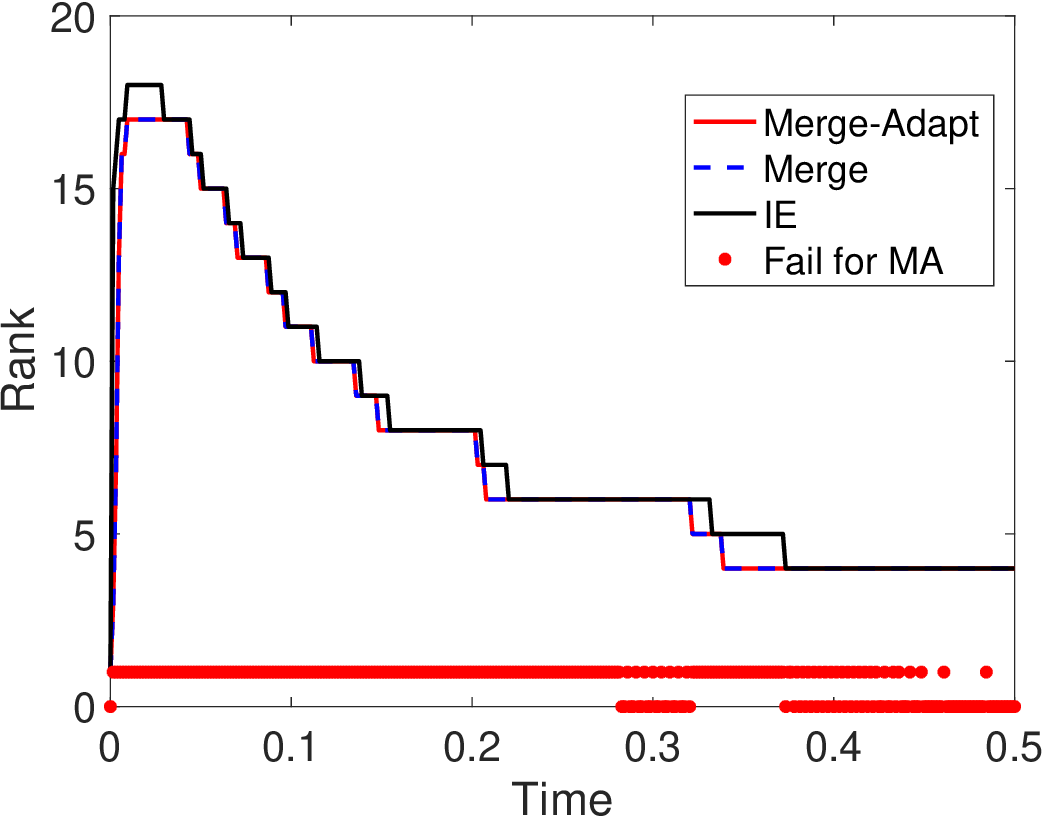}
\caption{Displayed is the {truncated} rank for the Merge, Merge-adapt and Implicit Euler method for the problem with  anisotropic diffusion initial data $ \sin( \pi x_1) \sin(\pi x_2),
$ (left) and  $\sin(2 \pi x_1) \sin(2\pi x_2)$ (right) and the solid body rotation problem (right). Here ``Fail for MA'' is the indicator where the BUG space is needed for the Merge-adapt method. For the Implicit Euler method we constantly use the threshold $\Delta t^2$ when computing the rank via the truncated SVD. All the computations are done with $m_1 = m_2 = 799$ and $n_{T} = 320.$ \label{fig:ani_rank}}
\end{center}
\end{figure}

In Table \ref{tab:ani_errors} we display the errors and rates of convergence. Again, the  methods are comparable although it should be noted that the rates of convergence for the classic implicit Euler method is more uniform than for the low rank methods. Curiously the error levels are smaller for the low rank method. It should also be noted that for this example the Merge-adapt method requires the addition of the BUG spaces almost every time step due to the stiffness of the problem.  We also track the rank of the solution as a function of time, the results can be found in  Figure \ref{fig:ani_rank}. The rank evolution is very similar for all methods.  

We also consider the more oscillatory initial data 
\[
\rho(0,x_1,x_2) = \sin( 2\pi x_1) \sin(2\pi x_2),
\] 
which we again evolve until time $t = 0.5$ using a base timestep of $\Delta t = 0.5 / n_T$, with $40, 80, 160, 320, 640, 1280$. Again, we carry out the computation on three different grids with $m_1=m_2 = 99, 399, 799$. 

This initial data decays more rapidly in time and as a consequence the residual norm becomes smaller and the Merge-adapt method does not have to add the BUG spaces as often. The errors and rates of convergence can be found in Table \ref{tab:ani_errors_sin2}, and the rank of the solution as a function of time, the results can be found in  Figure \ref{fig:ani_rot_rank}. It can be observed that for this example the classic implicit Euler method at first is a bit more accurate than the low rank methods but they catch up as the timestep decreases.  

\begin{table}[h!] 
 \begin{center} 
  \begin{tabular}{| p{0.8cm} | p{2.5cm} | p{4.2cm} | p{2.8cm}  |  p{2.5cm}  |}
 \hline
    $n_T $ & M & MA & IE & $\Delta t/h$ : $\Delta t/h^2$  \\
\hline 
\hline 
40 &  4.26(-1) &  1.00(0) F = 14&  7.58(-2) &   0.62 :     31 \\
\hline 
80 &  2.07(-1)  [1.03] &  1.00(0)  [0.00] F = 36&  3.80(-2)  [0.99] &   0.31 :     16 \\
\hline 
160 &  7.91(-2)  [1.39] &  1.95(-1)  [2.35] F = 60&  1.91(-2)  [0.99] &   0.16 :    7.8 \\
\hline 
320 &  2.35(-2)  [1.74] &  9.38(-2)  [1.05] F = 109&  9.67(-3)  [0.98] &  0.078 :    3.9 \\
\hline 
640 &  5.16(-3)  [2.19] &  9.77(-2)  [-0.0] F = 132&  4.97(-3)  [0.95] &  0.039 :      2 \\
\hline 
1280 &  1.01(-3)  [2.35] &  7.17(-2)  [0.44] F = 175&  2.68(-3)  [0.89] &   0.02 :   0.98 \\
\hline 
\hline 
40 &  4.87(-1) &  1.00(0) F = 14&  7.57(-2) &    1.2 :  128 \\
\hline 
80 &  2.62(-1)  [0.89] &  1.00(0)  [0.00] F = 36&  3.80(-2)  [0.99] &   0.62 :     62 \\
\hline 
160 &  1.13(-1)  [1.20] &  1.59(-1)  [2.64] F = 68&  1.90(-2)  [0.99] &   0.31 :     31 \\
\hline 
320 &  3.89(-2)  [1.54] &  6.80(-2)  [1.22] F = 134&  9.60(-3)  [0.98] &   0.16 :     16 \\
\hline 
640 &  1.06(-2)  [1.87] &  3.32(-2)  [1.03] F = 255&  4.87(-3)  [0.97] &  0.078 :    7.8 \\
\hline 
1280 &  2.02(-3)  [2.39] &  2.54(-2)  [0.38] F = 375&  2.52(-3)  [0.95] &  0.039 :    3.9 \\ 
\hline
\hline 
\hline 
40 &  5.85(-1) &  1.00(0) F = 13&  7.58(-2) &      5 :  2000 \\
\hline 
80 &  3.57(-1)  [0.71] &  1.00(0)  [0.00] F = 38&  3.80(-2)  [0.99] &    2.5 :  1000 \\
\hline 
160 &  1.81(-1)  [0.98] &  2.35(-1)  [2.08] F = 76&  1.91(-2)  [0.99] &    1.2 :  500 \\
\hline 
320 &  7.74(-2)  [1.22] &  8.88(-2)  [1.40] F = 175&  9.71(-3)  [0.97] &   0.62 :  250 \\
\hline 
640 &  2.89(-2)  [1.42] &  3.24(-2)  [1.45] F = 332&  5.04(-3)  [0.94] &   0.31 :  125 \\
\hline 
1280 &  9.18(-3)  [1.65] &  1.24(-2)  [1.38] F = 625&  2.78(-3)  [0.85] &   0.16 :     62.5 \\
\hline 
\end{tabular}
 \caption{Anisotropic diffusion for a higher frequency initial data. Displayed are the errors for different timesteps along with estimated rates of convergence (in brackets). The numbers on the far right are the ``hyperbolic and parabolic CFL numbers''. The top box is for $m_1=m_2=99$, the middle for $m_1=m_2=199$ and the bottom for $m_1=m_2=799$. The F indicates how many times the Merge-adapt needed to add the BUG spaces. \label{tab:ani_errors_sin2}} 
 \end{center} 
 \end{table}

\subsection{Timing results}
 \begin{table}[h!] 
 \begin{center} 
  \begin{tabular}{| p{1.7cm} | p{1.7cm} | p{1.4cm} |  p{1.7cm} | p{1.4cm} |  p{1.7cm} | p{1.4cm} | p{1.4cm} |}
 \hline
    $m_1 = m_2$ & time MA & time M & time MA  & time M  &time MA   & time M & time IE  \\
 \hline
    499 & 2 [0.1\%] & 18 & 6 [0.0\%] & 19 & 10 [45\%] & 28  & 1315\\
    \hline
    999 & 4 [0.1\%] & 68 & 12 [0.0\%] & 61 & 38 [48\%] & 260 & 153800\\
    \hline
 1999 & 11 [0.1\%] & 244 & 37 [0.0\%] & 129 &  535 [51\%] & 1645  & N/A\\
 \hline
 \end{tabular}
 \caption{CPU-times (in seconds) for the problems \ref{rot_example} \ref{ani_rot_example} and \ref{ani_heat_example} from lef to right. The numbers in brackets are the percentages for how often MA has a residual that necessitates the addition of the BUG space. \label{tab:CPU_TIMES}} 
 \end{center} 
 \end{table} 
In this experiment we repeat the experiments in Sections  \ref{rot_example} \ref{ani_rot_example} and \ref{ani_heat_example}. We use $n_T=1000$ and carry out computations on fine grids with $m_1=m_2 = 499, 999, 1999$.  We report the CPU times for the M and MA methods in Table \ref{tab:CPU_TIMES}. As can be seen the MA is generally faster than the M method. The more intensive use of the BUG space appears to happen when the ratio $\mu \Delta t/h^2$ is large. For the example in Section \ref{ani_heat_example} we have also added the time needed for the IE method for the two coarsest resolutions (the computation on the finest mesh could not fit on the computer used). For the IE timing we have only measured the time for the forward and backward substitutions in each timestep, not for the initial LU-factorization.

\section{Conclusions and future work}
\label{sec:conclusion}

This work prototypes a class of implicit  adaptive low rank time-stepping schemes. By simply merging the explicit step truncation and the BUG spaces, the modeling error is removed and the numerical schemes achieve robust convergence upon mesh refinement. An adaptive strategy is proposed for the prediction of row and column spaces, which is computationally advantageous for moderately stiff problems. The immediate future work is the generalization to  higher order in time. More importantly, the ideas can be applied to   tensor differential equations, which will be investigated in the future.

\section{Acknowledgement}
We would like to thank Steven R. White for pointing out the reference \cite{PhysRevB.102.094315}.

\section*{Declarations}
This research is partially supported by DOE Office of Advanced Scientific Computing Research under the Advanced Research in Quantum Computing program, subcontracted from award 2019-LLNL-SCW-1683, NSF DMS-2208164, NSF DMS-2011838, DOE grant DE-SC0023164 and Virginia Tech. The authors have no relevant financial or non-financial interests to disclose.

\section*{Data availability statement}
Data sets generated during the current study are available from the corresponding author on reasonable request. 

\bibliographystyle{abbrv}
\bibliography{lowrank}
\end{document}